\documentclass[12pt,a4paper]{amsart}

\usepackage{amsmath,amssymb,amsthm}

\usepackage{mathtools}
\usepackage{enumitem}
\usepackage{hyperref}
\usepackage{geometry}
\usepackage{booktabs}
\usepackage{xcolor}
\newtheorem{theorem}{Theorem}[section]
\newtheorem{lemma}[theorem]{Lemma}
\newtheorem{proposition}[theorem]{Proposition}

\theoremstyle{definition}

\theoremstyle{remark}
\newtheorem{remark}[theorem]{Remark}

\newcommand{\Wr}{\mathrm{W}}
\newcommand{\Z}{\mathbb{Z}}

\begin{document}

\title[Zappa--Sz\'ep products of a wreathed and a cyclic group]
{On Zappa--Sz\'ep products of a wreathed \(2\)-group and a cyclic group}

\author{Riccardo Aragona}
\address{Dipartimento di Ingegneria e Scienze dell'Informazione e Matematica,
Universit\`a degli Studi dell'Aquila, Via Vetoio, 67100 L'Aquila, Italy}
\email{riccardo.aragona@univaq.it}

\author{Martina Iannaccone}
\address{Dipartimento di Ingegneria e Scienze dell'Informazione e Matematica,
Universit\`a degli Studi dell'Aquila, Via Vetoio, 67100 L'Aquila, Italy}
\email{martina.iannaccone@graduate.univaq.it}

\thanks{The first author's ORCiD: 0000-0001-8834-4358}
\thanks{The first author is member of INdAM-GNSAGA (Italy).}
\thanks{The authors declare that they have no conflict of interest.}
\subjclass[2020]{20D40, 20D60, 20F05, 20D15, 20E22}
\keywords{Zappa--Sz\'ep products, exact factorizations of finite groups, wreathed 2-group, polycyclic presentations, consistency conditions}
\date{\today}

\begin{abstract}
Let \(n, m \ge 1\), and let \(H = C_{2^n}\wr C_2\) be the wreathed
\(2\)-group and \(K = C_{2^m}=\langle z \rangle\) a cyclic group.   We classify the
Zappa--Sz\'ep products \(G = HK\) in which the base
\(B \cong C_{2^n}\times C_{2^n}\) of \(H\) is normal in \(G\).   When the
matrix \(Z\) of the \(z\)-action on \(B\) commutes with the swap \(J\) of
the two base generators -- equivalently, \(Z\) is symmetric circulant --
we classify these products by an explicit system of seven polynomial
congruences in a tuple \((p,q,r,s,c)\).   Dropping this hypothesis, we
obtain a unified classification of all such products with \(B\) normal
by five congruences on the entries of \(Z\) and the parameters
\((r,s,c)\), of which the symmetric case is the specialisation
\(JZ = ZJ\).   Finally, we separately treat the classification for the modulus \(M = 2^m = 4\), since in this case the congruences degenerate.
\end{abstract}

\maketitle

\section{Introduction}\label{sec:intro}

Let \(G\) be a finite group. When \(G\)  can be written as a product of proper
subgroups \(H\) and \(K\) is called a \emph{factorization} of \(G\).   When
\(H\cap K = \{1\}\) the factorization is called \emph{exact}, and \(G\) is
named the \emph{Zappa--Sz\'ep product} of \(H\) and
\(K\)~\cite{Szep1949,Zappa1940}.

The factorization problem -- determining all factorizations of a given
group -- goes back to Ore~\cite{Ore1937} and Miller~\cite{Miller1935}, and
has a long and rich history.   Several landmark structural results are
classical. It\^o~\cite{Ito1955} proved that a product of two abelian
subgroups is metabelian, Douglas~\cite{Douglas1961} that a product of two
cyclic groups is supersolvable, and Wielandt~\cite{Wielandt1951} and
Kegel~\cite{Kegel1961} that a product of two nilpotent subgroups is
solvable.   Also for almost simple groups the problem has been studied
intensively. Hering, Liebeck and Saxl~\cite{HLS1987} treated the finite
exceptional groups of Lie type, while Liebeck, Praeger and Saxl
classified the maximal factorizations of almost simple
groups~\cite{LPS1996} and the regular subgroups of primitive permutation
groups~\cite{LPS2010}, with further developments by Xia~\cite{Xia2017},
Burness and Li~\cite{BL2021}, Li, Wang and Xia~\cite{LWX2023}, and
Jones~\cite{Jones2002}.

In this paper we study the variant in which two finite groups \(H\) and
\(K\) are fixed and one seeks all of their Zappa--Sz\'ep products.
Hu and Yu~\cite{HuYu2025} treated two dihedral groups, and Aragona~\cite{Aragona} studied two semidihedral groups, each parameterising every
such product by integer parameters in suitable cyclic quotients subject to
explicit congruences; the mixed case of a (semi)dihedral group times a
cyclic group was treated by Hu, Kov\'acs and Kwon~\cite{HKK2024} and by
Yu~\cite{Yu2025}.   The dihedral, semidihedral and wreathed \(2\)-groups
are exactly the three families of Sylow \(2\)-subgroups of simple groups
of \(2\)-rank two~\cite{ABG,GW}; the present paper initiates the
classification for the third family, beginning with the product of a
wreathed \(2\)-group and a cyclic group.

Recall that the \emph{wreathed \(2\)-group} of order \(2^{2n+1}\), for
\(n \ge 1\), is
\begin{equation}\label{eq:W-def}
	\Wr_{2^n}
	:= C_{2^n}\wr C_2
	= (C_{2^n}\times C_{2^n})\rtimes C_2
	= \langle a,b,t \mid a^{N}=b^{N}=t^2=1,\ [a,b]=1,\ a^t=b,\ b^t=a\rangle,
\end{equation}
where \(N := 2^n\) and the involution \(t\) swaps the two cyclic factors
of the base \(B = \langle a\rangle\times\langle b\rangle\cong(\Z_N)^2\).
Throughout the paper,  we fix \(n\ge 1\), \(m \ge 1\) and set \(N := 2^{n}\),
\(M := 2^{m}\), so that \(|\Wr_{2^n}| = 2N^2\) and \(|C_{2^m}| = M\).

The arithmetic engine of the dihedral and semidihedral analyses is the
scalar \(1 + \alpha\) attached to the cyclic base, with \(1+\alpha = 0\)
in the dihedral case and \(1+\alpha = 2^{n-2}\ne 0\) in the semidihedral
one~\cite{Aragona}.   In the wreathed group the base is free abelian of
rank two and the involution \(t\) acts not by a scalar but by the swap
\(J = \left(\begin{smallmatrix}0&1\\1&0\end{smallmatrix}\right)\).   The role of \(1+\alpha\) is now played by the rank-one
matrix \(I+J = \left(\begin{smallmatrix}1&1\\1&1\end{smallmatrix}\right)\),
which acts as multiplication by \(2\) on the symmetric line
\(\langle(1,1)\rangle\) and vanishes on the antisymmetric line
\(\langle(1,-1)\rangle\).   This rank-one degeneration is the structural source of the new phenomena. On the antisymmetric line \(I+J\)
vanishes (as \(1+\alpha\) does in the dihedral case) and \(t\) acts by inversion, which admits Zappa--Sz\'ep products with no analogue when the base is cyclic of rank one; on the symmetric line \(I+J\) acts as the non-zero scalar 2, mirroring the non-vanishing of \(1+\alpha\) in the semidihedral case.

We write \(G = HK\) with
\(H = \langle a,b\rangle\rtimes\langle t\rangle \cong \Wr_{2^n}\) and
\(K = \langle z\rangle \cong C_{2^m}\), and work under the hypothesis that
the base \(B = \langle a,b\rangle\) is normal in \(G\).   This is a natural
and not so strong hypothesis.   Since \(B\) is characteristic in \(H\) (the unique abelian
subgroup isomorphic to \(C_{2^n}\times C_{2^n}\)), it is normal whenever
\(H\) is, but it may remain normal even when \(H\) is not.   Under
\(B\trianglelefteq G\) the generator \(z\) acts on \(B\) by an automorphism,
a matrix \(Z\in\mathrm{GL}_2(\Z_N)\), whose four entries give the conjugation
rules \(a^z = a^{\alpha}b^{\beta}\) and \(b^z = a^{\gamma}b^{\delta}\), with
\(\alpha,\beta,\gamma,\delta\in\Z_N\).   In the \emph{symmetric circulant}
case \(JZ = ZJ\) (equivalently \(\alpha = \delta\) and \(\beta = \gamma\)),
which we treat first, the conjugation reads
\begin{equation}\label{eq:mixed-intro}
	a^z = a^{1+p} b^{q}, \qquad
	b^z = a^{q} b^{1+p}, \qquad
	t^z = t\,z^{2c}\,a^{r} b^{s},
\end{equation}
with \(p,q,r,s \in \Z_N\) and \(c\in\Z_M\) (the second rule then being forced
by the first and the swap \(a^t = b\), so that
\(Z = \left(\begin{smallmatrix}1+p&q\\q&1+p\end{smallmatrix}\right)\)).   The
conjugate \(t^z\) and its parameters \(r,s,c\) are common to both cases, the
non-symmetric branch \(JZ\ne ZJ\) being treated in a unified entry form
(Section~\ref{sec:nonsym}).   The
novelty over the cyclic-base classifications is twofold -- the rank-two
base, governed by \(J\), and the parameter \(c\), encoding the power of
\(z\) in the conjugation \(t^z\).
We prove (Proposition~\ref{prop:zt}) the closed form
\(z^t = z^{1-2c}(a^{-r}b^{-s})^{Z^{-2c}}\), so that modulo \(B\) the
involution \(t\) sends \(z\) to \(z^{1-2c}\).   The integer \(k := 1-2c\)
is an involution of \(\Z_M\), and it governs both the splitting and the
interaction of \(Z\) with the swap (\(JZJ = Z^k\),
Lemma~\ref{lem:circulant}). The product is split
(\(G = H\rtimes K\), with \(H\) normal) precisely when
\(2c\equiv 0\pmod M\), and a non-split Zappa--Sz\'ep product otherwise,
with \(G/B\) a non-abelian group of order \(2M\) -- dihedral, semidihedral,
or modular maximal-cyclic according to \(k\) modulo \(M\).

Since \(K = \langle z\rangle\) is cyclic and \(H\cap K = \{1\}\), the
products we classify are
closely tied to \emph{skew morphisms}.
A skew morphism of a group \(H\) -- introduced by Jajcay and
\v{S}ir\'a\v{n}~\cite{JajcaySiran2002} to study regular Cayley maps -- is
a permutation \(\varphi\) of \(H\) fixing \(1\) and satisfying
\(\varphi(gh) = \varphi(g)\,\varphi^{\pi(g)}(h)\) for an associated power
function \(\pi\), generalising group automorphisms (the case
\(\pi\equiv 1\)).   By the correspondence of Conder, Jajcay and
Tucker~\cite{SkewMorph}, the skew morphisms of \(H\) correspond to the
factorizations \(X = H\langle c\rangle\) with \(\langle c\rangle\) cyclic,
\(H\cap\langle c\rangle = 1\) and \(\langle c\rangle\) core-free in \(X\)
-- the so-called \emph{skew product groups} of \(H\).   Thus those of our
Zappa--Sz\'ep products in which \(K\) is core-free in \(G\) are precisely
the skew product groups of the wreathed \(2\)-group \(H\); the analogous
groups for the abelian group \(C_{2^{n-1}}\times C_2\) were recently
classified by Li, Meng and Lu~\cite{LiMengLu2026}.

Our results are the following.   When \(Z\) is symmetric circulant
(equivalently \(JZ = ZJ\)) it has eigenvalues \(\sigma = 1+p+q\) and
\(\mu = 1+p-q\) in \(\Z_N\), and the Zappa--Sz\'ep products with \(B\)
normal are classified by an explicit system of seven congruences in
\((p,q,r,s,c)\) (Theorem~\ref{thm:main}).   Dropping the symmetric
circulant hypothesis, a single unified classification in matrix
entry form -- phrased through the Cayley--Hamilton sequences of \(Z\),
since a general \(Z\in\mathrm{GL}_2(\Z_N)\) has no eigenvalues in
\(\Z_N\) -- covers all admissible \(z\)-actions by five matrix congruences
(Theorem~\ref{thm:nonsym}); the symmetric case, which we develop first, is
its specialisation \(JZ = ZJ\).   The modulus \(M = 4\), where the
congruences degenerate -- the two matrix congruences
collapse, and the matrix identity \(Z^2 = I\) acquires an eigenvalue form
only modulo \(2N\) -- is treated separately and we again obtain necessary and sufficient conditions (Theorems~\ref{thm:nonsym-M4} and~\ref{thm:M4}).   Finally, the
normality of \(B\) is a nontrivial restriction, since Zappa--Sz\'ep products with
\(B\) non-normal already occur at \(|G| = 128\), verified with GAP~\cite{GAP}.

The proofs combine elementary group-theoretic identities (for the
necessary condition) with the consistency theorem for polycyclic
presentations (for the sufficient condition;
see~\cite[Lemma~2.5]{Eick2000} and~\cite[p.~424]{Sims}); all counting
statements have been verified with GAP~\cite{GAP}.   The paper is organised
as follows.   Section~\ref{sec:setup} fixes notation and the arithmetic of
\(J\) and \(I+J\).   Section~\ref{sec:main-thm} derives the closed form for
\(z^t\) and proves the first main theorem, and Section~\ref{sec:split-recover}
explains how the split case (\(2c\equiv 0\bmod M\)) is recovered.
Section~\ref{sec:nonsym} lifts the symmetric circulant hypothesis and proves
the unified entry-form classification, of which Theorem~\ref{thm:main} is the
symmetric specialisation.   Finally, Section~\ref{sec:M4} treats the modulus
\(M = 4\), and Section~\ref{sec:conclusion} describes some open problems.

\section{Notation and arithmetic facts}\label{sec:setup}

Throughout the paper, all groups are finite.   We write
\([g,h] = g^{-1}h^{-1}gh\) and \(g^h = h^{-1}gh\), so that
\(g^h = g\cdot[g,h]\).   We fix \(N = 2^n\) and \(M = 2^m\) with \(n,m\ge 1\).

The base \(B = \langle a\rangle\times\langle b\rangle\) of
\(H = \Wr_{2^n}\) is free abelian of rank two over \(\Z_N\), and we
identify an element \(a^{\,i}b^{\,j}\in B\) with its exponent vector
\((i,j)\in\Z_N^2\).   In these coordinates the swap automorphism
\(\beta\mapsto\beta^t\) of \(B\) induced by \(t\) is the matrix
\[
J = \begin{pmatrix} 0 & 1 \\ 1 & 0 \end{pmatrix},
\qquad J^2 = I,
\]
since \(a^t = b\) and \(b^t = a\).

\subsection*{The \(z\)-action on the base}
Assume that the base \(B\) is normal in \(G\) and  we denote by \(\bar x\) the coset  \(xB\) in \(G/B\).  The element \(z\) acts on \(B\) by an
automorphism, which we write as a matrix \(Z\in\mathrm{GL}_2(\Z_N)\)
acting on exponent vectors.   The following lemma shows that this
matrix is constrained by the swap, and that \(t^z\) has the form introduced
in~\eqref{eq:mixed-intro}.

\begin{lemma}\label{lem:circulant}
	Let \(G = HK\) be an exact product with \(H\cong\Wr_{2^n}\),
	\(K = \langle z\rangle\cong C_{2^m}\), and suppose the base
	\(B = \langle a,b\rangle\) is normal in \(G\).   Then there exists an
	integer \(k\in\Z_M\) with \(k^2\equiv 1\pmod M\) such that the matrix
	\(Z\in\mathrm{GL}_2(\Z_N)\) of the \(z\)-action on \(B\) satisfies
	\begin{equation}\label{eq:JZJ-epsilon}
		J\,Z\,J \;=\; Z^{k},
	\end{equation}
	and there exist \(c\in\Z_M\) and \(r,s\in\Z_N\) with
	\begin{equation}\label{eq:tz-form}
		t^z \;=\; t\,z^{2c}\,a^{r}\,b^{s}.
	\end{equation}
	Moreover the
	exponents are linked by \(k\equiv 1-2c\pmod M\).
\end{lemma}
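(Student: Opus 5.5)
The plan is to pass to the quotient \(\bar G = G/B\), read off the structure of \(\bar G\) from the orders of \(\bar t\) and \(\bar z\), and then lift the relations back to \(G\), using that \(B\) is abelian.

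\emph{Structure of \(\bar G\).} Since \(B\trianglelefteq G\) and \(|G| = |H||K| = 2N^2M\), the quotient \(\bar G\) has order \(2M\) and is generated by \(\bar t\) and \(\bar z\). The coset \(\bar z\) has order exactly \(M\): if \(z^i\in B\subseteq H\), then \(z^i\in H\cap K=\{1\}\). Thus \(\langle\bar z\rangle\) has index \(2\) in \(\bar G\), so it is normal. Hence \(\bar t\,\bar z\,\bar t = \bar z^{\,k}\) for some \(k\in\Z_M\). Since \(\bar t\) is an involution (\(t^2=1\)), conjugating twice gives \(\bar z^{\,k^2}=\bar z\), i.e. \(k^2\equiv 1\pmod M\). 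As \(M\) is even, \(k\) is odd, so \(1-k\) is even and we may choose \(c\in\Z_M\) with \(1-k\equiv 2c\pmod M\). The ambiguity of \(c\) modulo \(M/2\) is harmless, and we simply fix a representative.

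\emph{The form \eqref{eq:tz-form}.} In \(\bar G\) we compute, using \(\bar t^{-1}=\bar t\),
\[
\bar z^{-1}\bar t\,\bar z \;=\; \bar t\,(\bar t\,\bar z^{-1}\bar t)\,\bar z \;=\; \bar t\,\bar z^{-k}\bar z \;=\; \bar t\,\bar z^{\,1-k} \;=\; \bar t\,\bar z^{\,2c}.
\]
Therefore \(t^z\in t z^{2c}B\). Since every element of \(B\) can be written as \(a^rb^s\), this gives \(t^z = t z^{2c}a^rb^s\) for some \(r,s\in\Z_N\). The same choice of \(c\) also yields the link \(k\equiv 1-2c\pmod M\).

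\emph{The matrix identity \eqref{eq:JZJ-epsilon}.} From \(\bar t\bar z\bar t=\bar z^k\) we get \(tzt = z^k\beta\) for some \(\beta\in B\). Because \(B\) is abelian, \(\beta\) acts trivially on \(B\) by conjugation. Hence \(tzt\) and \(z^k\) induce the same automorphism of \(B\), and the matrix of \(z^k\) is \(Z^k\). This is well defined for \(k\in\Z_M\), since \(z^M=1\) forces \(Z^M=I\). The automorphism induced by \(tzt\) is the composite of the actions of \(t\), \(z\), \(t\). With the paper's convention of acting on exponent vectors, its matrix is \(JZJ\). The order of the factors is immaterial here, because the outer factors coincide and \(J=J^{-1}\).

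\emph{Main obstacle.} There is no substantial obstacle. The only point needing care is bookkeeping of conventions: right conjugation \(g^h=h^{-1}gh\), and whether matrices act on row or column vectors. The symmetric shape \(J\,\cdot\,J\) makes the result independent of these choices. The essential input is simply that \(\bar z\) has order exactly \(M\), which is where exactness \(H\cap K=1\) is used.
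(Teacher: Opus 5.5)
Your proof is correct and follows essentially the same route as the paper. You pass to \(G/B\), use \(H\cap K=1\) to get \(|\bar z|=M\), so \(\langle\bar z\rangle\) is normal of index \(2\) and \(\bar z^{\bar t}=\bar z^{k}\) with \(k^2\equiv 1\pmod M\); then you lift to \(JZJ=Z^k\) because \(B\) acts trivially on itself, and read off \(t^z\in tz^{1-k}B\). The only cosmetic difference is that you fix \(c\) first and get oddness of \(k\) directly from \(k^2\equiv 1\) modulo the even \(M\), whereas the paper writes \(t^z=tz^{e}a^rb^s\), shows \(e\equiv 1-k\), and gets evenness of \(e\) by listing the involutions of \(\Z_M\).
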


\begin{proof}
	Since \(B\trianglelefteq G\), both \(t\) and \(z\) act on \(B\) by conjugation,
	with matrices \(J\) and \(Z\) respectively in \(\mathrm{GL}_2(\Z_N)\).   In
	the quotient \(\bar G = G/B\) of order \(2M\), the image \(\bar z\) has order
	exactly \(M\). Indeed, since \(G = HK\) is an exact product, \(K\cap H = 1\),
	and \(B\subseteq H\) gives \(K\cap B\subseteq K\cap H = 1\), so the
	restriction of the quotient map \(G\to G/B\) to \(K\) is injective and
	\(|\langle\bar z\rangle| = |\langle z\rangle| = M\).   Similarly
	\(\bar t\) has order \(2\).
	
	First we prove Eq.~\eqref{eq:JZJ-epsilon}, that is \(JZJ = Z^k\) for some \(k\) with \(k^2\equiv 1\pmod M\).
	The subgroup \(\langle\bar z\rangle\) has index \(2\) in \(\bar G\), hence
	is normal in \(\bar G\); therefore \(\bar z^{\bar t}\in\langle\bar
	z\rangle\), so \(\bar z^{\bar t} = \bar z^{k}\) for some \(k\in\Z_M\).
	Conjugation by \(\bar t\) is an involution of \(\langle\bar z\rangle\), so the integer \(k\) satisfies \(k^2\equiv 1
	\pmod M\).   Lifting back to \(G\), this means \(z^t \in z^{k}B\); since \(B\) is abelian, the
	\(B\)-factor acts trivially on \(B\) itself, so \(z^t\) acts on \(B\) as
	\(Z^{k}\).   On the other hand \(z^t = t^{-1}zt\) acts
	on \(B\) with matrix \(J^{-1}ZJ = JZJ\) (using \(J^{-1} = J\)), so we
	obtain~\eqref{eq:JZJ-epsilon}.

	Since \(\bar t^{\,\bar z} = \bar{z}^{-1} \bar{t}\bar{z} = \bar t\,[\bar t,\bar z]=\bar{t}(\bar{z}^{-1})^{\bar{t}}\bar{z}\) lies in the coset
	\(\bar t\langle\bar z\rangle\) of \(\bar G\), we have \(t^z\in t\langle
	z\rangle B\), i.e.\ \(t^z = t\,z^{e}a^{r}b^{s}\) for some \(e\in\Z_M\) and
	\(r,s\in\Z_N\).
	
	It remains to prove that \(k\equiv 1-e\pmod M\), and \(e\) is even.
	From the relation \(t^z = t\,z^{e}a^{r}b^{s}\) we read off
	\(\bar t^{\,\bar z} = \bar t\,\bar z^{e}\), i.e.\
	\(\bar z^{-1}\bar t\,\bar z = \bar t\,\bar z^{e}\).   Multiplying on the
	left by \(\bar t\) and using \(\bar t\,\bar z^{-1}\,\bar t = (\bar
	z^{-1})^{\bar t} = \bar z^{-k}\) (from Step 1), we obtain
	\[
	\bar z^{-k}\,\bar z \;=\; \bar z^{e},
	\qquad\text{that is,}\qquad
	\bar z^{\,1-k} \;=\; \bar z^{e},
	\]
	so \(e\equiv 1-k\pmod M\), equivalently \(k\equiv 1-e\pmod M\).
	Finally, the involutions of the cyclic \(2\)-group \(\langle\bar z\rangle
	\cong C_M\) -- the values of \(k\) with \(k^2\equiv 1\pmod M\) -- are
	\[
	k\;\in\;\{1,\;M-1\}\quad\text{for }M\le 4,
	\qquad
	k\;\in\;\{1,\;M/2-1,\;M/2+1,\;M-1\}\quad\text{for }M\ge 8;
	\]
	in every case \(k\) is odd, so \(e = 1-k\) is even.   Writing \(e = 2c\)
	with \(c\in\Z_M\) gives the form~\eqref{eq:tz-form}, and the link
	\(k\equiv 1-2c\pmod M\).
\end{proof}

\begin{remark}\label{rem:k-vs-c}
	The integer \(k\equiv 1-2c\pmod M\) ranges over the involutions
	\(\{k\in\Z_M: k^2\equiv 1\pmod M\}\) of \(\Z_M\) as \(c\) ranges over \(\Z_M\).
	For \(M\le 4\) there are only two such involutions, \(k = 1\)  if \(c\) is even
	and \(k = M-1\equiv -1 \pmod M\) if \(c\) is odd, and the parity of \(c\) alone
	determines whether \(JZJ = Z\) or \(JZJ = Z^{-1}\).   For \(M\ge 8\) there
	are four involutions and the situation is finer. The parity of \(c\)
	distinguishes \(\{k = 1,\, k = M/2+1\}\) if \(c\) is even, from
	\(\{k = M/2-1,\, k = M-1\}\) if \(c\) is odd, but \(JZJ = Z\) is now strictly
	stronger than ``\(c\) even'', because \(c\) even still permits
	\(k = M/2+1\ne 1\).   
	
	\medskip
	\noindent\emph{Example} (\(M = 8\)). The involutions of \(\Z_8\) are
	\(\{1, 3, 5, 7\}\). Computing \(k \equiv 1-2c \pmod 8\) for \(c\in\Z_8\):
	\[
	\begin{array}{c|cccccccc}
		c & 0 & 1 & 2 & 3 & 4 & 5 & 6 & 7 \\\hline
		k & 1 & 7 & 5 & 3 & 1 & 7 & 5 & 3
	\end{array}
	\]
	Hence \(c\) even corresponds to \(k\in\{1, 5\}\) and \(c\) odd to
	\(k\in\{3, 7\}\).   The symmetric circulant presentation imposes \(JZJ =
	Z\), i.e.\ \(Z^{k-1} = I\). This is obvious for \(k = 1\), i.e., when \(c\in\{0,4\}\)), whereas for \(c\in\{2,6\}\), i.e., \(k = 5\), it forces
	the additional constraint \(Z^4 = I\).
\end{remark}

\begin{remark}\label{rem:two-families}
	According to whether \(Z\) commutes with \(J\) or not, the
	matrices \(Z\in\mathrm{GL}_2(\Z_N)\) satisfying~\eqref{eq:JZJ-epsilon}
	fall into two distinct families:
	\begin{itemize}[leftmargin=2em]
		\item the \emph{symmetric circulant family}, \(JZ = ZJ\); in this case the equation \(JZJ = Z^k\) from
		Lemma~\ref{lem:circulant} reduces to \(Z^{k-1} = I\).   The matrices
		commuting with \(J\) are exactly of the form
		\[	Z = \begin{pmatrix} a & b \\ b & a \end{pmatrix}
		\qquad a,b\in\Z_N.\]
	However, for the sake of consistency with the notation \(1+\alpha\) used for the arithmetic engine in the dihedral and semidihedral cases, for the matrix \(Z\) we adopt the following notation 
		\begin{equation}\label{eq:Zmatrix}
			Z = \begin{pmatrix} 1+p & q \\ q & 1+p \end{pmatrix}
			= I + \begin{pmatrix} p & q \\ q & p \end{pmatrix},
			\qquad p,q\in\Z_N.
		\end{equation}
		Notice that \(p\) and \(q\) must have the same parity, since \(Z\) is invertible. 
		Indeed, $1+p$ is just a normalisation of the diagonal entry, not a
		restriction, because $p\mapsto 1+p$ is a bijection of $\Z_N$, so that
		\eqref{eq:Zmatrix} already describes every symmetric circulant
		matrix. Moreover, this notation highlights that $Z=I$ precisely when  $p=q=0$ (or, equivalently when $z$ centralises $B$), so that the pair $(p,q)$ gives a measure of the deviation of \(Z\) from \(I\). Since $Z-I=pI+qJ$, then with respect to the basis $\{I,J\}$ one has $Z=(1+p)I+qJ$. 
		\item the \emph{non-symmetric family}, \(JZ\ne ZJ\) but
		\(JZJ = Z^k\) for some non-trivial involution \(k\) of \(\Z_M\)
		\textnormal{(}so \(k\) may be \(-1, M/2-1, M/2+1\) for \(M\ge 8\), or \(M-1\)
		alone for \(M\le 4\)\textnormal{)}.   Members of this family are not
		captured by the presentation~\eqref{eq:mixed-intro} and are necessarily
		non-split.
	\end{itemize}
	
	\emph{This paper classifies both families.}   We first develop the
	\emph{symmetric circulant} family in detail (beginning in
	Section~\ref{sec:main-thm}) and then lift the hypothesis \(JZ = ZJ\),
	treating the non-symmetric family in a unified entry form
	(Section~\ref{sec:nonsym}, Theorem~\ref{thm:nonsym}); the symmetric case
	is also recovered as the specialisation \(JZ = ZJ\).

	Within the symmetric circulant family, we will see in Proposition~\ref{prop:zt}, the parity of \(c\) fixes the
	structure of the quotient \(G/B\):
	\begin{itemize}[leftmargin=2em]
		\item for \(c\) even (so \(k\equiv 1\pmod 4\)), the admissible
		circulants are those with \(Z^{k-1} = I\). In particular, for \(k = 1\)
		no condition beyond \(Z^M = I\) is imposed, and these are exactly the
		split products \(G = H\rtimes K\); for \(k = \frac{M}{2} + 1\) the
		quotient \(G/B\) is a non-abelian \(2\)-group of order \(2M\), a modular
		maximal-cyclic group;
		\item for \(c\) odd (so \(k\equiv -1\pmod 4\)), the circulants satisfying
		\(Z^{k-1} = I\) together with \(Z^M = I\). These are non-split
		products with \(G/B\) dihedral or semidihedral, for \(k=M-1\) and \(k=\frac{M}{2}-1\) respectively. Notice that, qhen \(M=4\), the case \(k = M - 1\equiv
		-1\pmod M\) forces \(Z^2 = I\).
	\end{itemize}
%
\end{remark}

The form~\eqref{eq:Zmatrix} is precisely that introduced
in~\eqref{eq:mixed-intro}.   The eigenvectors of every such \(Z\) are the
fixed lines of \(J\), on which
\[
Z\begin{pmatrix}1\\1\end{pmatrix}
= \sigma\begin{pmatrix}1\\1\end{pmatrix},
\qquad
Z\begin{pmatrix}1\\-1\end{pmatrix}
= \mu\begin{pmatrix}1\\-1\end{pmatrix},
\]
with eigenvalues
\begin{equation}\label{eq:eigenvalues}
	\sigma := 1+p+q \pmod N
	\qquad\text{(symmetric)},
	\qquad
	\mu := 1+p-q \pmod N
	\qquad\text{(antisymmetric)}.
\end{equation}
We shall refer to \(\sigma\) and \(\mu\) as the \emph{symmetric} and
\emph{antisymmetric eigenvalues} of the \(z\)-action.

\medskip
Given \(\beta = a^i b^j \in B\), we call
\(i+j\) the \emph{symmetric component} and \(i-j\) the
\emph{antisymmetric component} of \(\beta\), in other words, the components of the
exponent vector \((i,j)\) along the eigenlines \(\langle (1,1)\rangle\) and
\(\langle (1,-1)\rangle\) of \(J\).
\medskip

\begin{lemma}[Arithmetic of \(J\) and \(I+J\)]\label{lem:IJ}
	Let  \(J\) and \(Z\) be as above.
	\begin{enumerate}[label=\normalfont(\roman*)]
		\item\label{it:diag} Let the vectors \(u = (1,1)\) and \(v = (1,-1)\) be the
		eigenvectors of \(Z\) with eigenvalues \(\sigma\) and \(\mu\)
		respectively. Then \(\sigma^k\equiv\mu^k\equiv 1\pmod N\) is
		\emph{necessary} for \(Z^k = I\), but it is not sufficient.   The
		correct criterion is given by the Cayley--Hamilton relation
		\(Z^2 = (\operatorname{tr}Z)\,Z - (\det Z)\,I\), with
		\(\operatorname{tr}Z = 2(1+p) = \sigma+\mu\) and
		\(\det Z = (1+p)^2 - q^2 = \sigma\mu\); writing \(Z^k = A_k Z + B_k I\),
		the integer sequences \(A_k, B_k\) such that \(A_1 = 1\), \(B_1 = 0\),
		\[
		A_{k+1} = (\operatorname{tr}Z)\,A_k + B_k,
		\qquad
		B_{k+1} = -(\det Z)\,A_k,
		\]
		and \(Z^k = I\) holds in \(\mathrm{GL}_2(\Z_N)\) if and only if
		\(A_k\,q\equiv 0 \pmod N\) (off-diagonal) and \(A_k(1+p) + B_k\equiv 1 \pmod N\)
		(diagonal).
		\item\label{it:rank} The operator \(I+J = \left(
		\begin{smallmatrix} 1 & 1 \\ 1 & 1\end{smallmatrix}\right)\) has
		rank one and  acts as multiplication by \(2\) on the symmetric line
		\(\langle u\rangle\) and as \(0\) on the antisymmetric line
		\(\langle v\rangle\).
		\item\label{it:swap-on-v} For a base element \(\beta = a^{\,i}b^{\,j}\)
		with antisymmetric component \(i - j\), one has
		\(\beta\beta^t = a^{\,i+j}b^{\,i+j}\), which depends only on the
		symmetric component \(i+j\).
	\end{enumerate}
\end{lemma}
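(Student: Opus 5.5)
The three items are direct linear-algebra verifications over \(\Z_N\), which I would carry out in turn, treating \(Z\) in the symmetric circulant form \eqref{eq:Zmatrix}.

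\emph{Item \ref{it:diag}.} First I check the eigenvector claims directly: \(Zu = (1+p+q)u = \sigma u\) and \(Zv = (1+p-q)v = \mu v\).

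\emph{Necessity.} If \(Z^k = I\), then \(u = Z^k u = \sigma^k u\), and reading off the first coordinate gives \(\sigma^k\equiv 1\pmod N\). Likewise \(\mu^k\equiv 1\pmod N\).

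\emph{Failure of sufficiency.} The point is that \(u,v\) do not form a basis of \(\Z_N^2\), since \(\det(u\mid v) = -2\) is not a unit. A concrete counterexample is \(N=4\), \(k=1\), \(Z = \left(\begin{smallmatrix}3&2\\2&3\end{smallmatrix}\right)\) (that is, \(p=q=2\)). Then \(\sigma = 7\equiv 3\), which does not give \(\sigma\equiv 1\). So I would instead choose \(p\) and \(q\) with \(\sigma\equiv\mu\equiv 1\) but \(Z\neq I\). The conditions are \(p+q\equiv 0\) and \(p-q\equiv 0\), so \(2q\equiv 0\) and \(q = N/2\), \(p = N/2\). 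For \(N\ge 2\) and \(k=1\) this gives \(Z = I + \tfrac N2\left(\begin{smallmatrix}1&1\\1&1\end{smallmatrix}\right)\neq I\), while \(\sigma = 1+N\equiv 1\) and \(\mu\equiv 1\). This settles non-sufficiency.

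\emph{Cayley--Hamilton criterion.} The Cayley--Hamilton identity \(Z^2 = (\operatorname{tr}Z)Z - (\det Z)I\) holds over any commutative ring. I compute \(\operatorname{tr}Z = 2(1+p) = \sigma+\mu\) and \(\det Z = (1+p)^2-q^2 = \sigma\mu\). By induction, \(Z^k = A_kZ + B_kI\) with the stated recursion, because
\[
Z^{k+1} = A_kZ^2 + B_kZ = (A_k\operatorname{tr}Z + B_k)Z - A_k\det Z\, I .
\]
Then \(Z^k = \left(\begin{smallmatrix}A_k(1+p)+B_k & A_kq\\ A_kq & A_k(1+p)+B_k\end{smallmatrix}\right)\). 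Comparing this entrywise with \(I\) yields exactly the two stated congruences, so the criterion is an iff.

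\emph{Item \ref{it:rank}.} I compute \((I+J)u = 2u\) and \((I+J)v = 0\). Rank one follows because the two columns of \(I+J\) are equal and nonzero.

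\emph{Item \ref{it:swap-on-v}.} We have \(\beta^t = a^jb^i\), so \(\beta\beta^t = a^{i+j}b^{i+j}\). In vector form this is \((I+J)(i,j)^T\).

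None of these steps is a real obstacle. The only point needing care is to exhibit an explicit counterexample for non-sufficiency, and the \(p=q=N/2\) example above does this.
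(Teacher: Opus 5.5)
Your proposal is correct and follows the paper's proof item by item: the eigenvector computation, the Cayley--Hamilton induction \(Z^{k+1}=A_kZ^2+B_kZ\) with an entrywise comparison of the circulant \(A_kZ+B_kI\) against \(I\), and the direct computations for (ii) and (iii). The one difference is the non-sufficiency claim. The paper only observes that the eigenvector matrix has the non-unit determinant \(-2\). Your explicit example \(p=q=N/2\), \(k=1\) gives \(\sigma=1+N\equiv 1\) and \(\mu=1\), yet \(Z=I+\tfrac N2(I+J)\neq I\), so it actually proves the claim. Keep that example and drop the abandoned \(p=q=2\) attempt.
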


\begin{proof}
	For~\ref{it:diag}, \(Zu = (I + \left(\begin{smallmatrix}p&q\\q&p
	\end{smallmatrix}\right))u\) with \(\left(\begin{smallmatrix}p&q\\q&p
	\end{smallmatrix}\right)u = (p+q)u\) gives eigenvalue \(\sigma = 1+p+q\),
	and similarly \(\mu = 1+p-q\) on \(v\). The sufficient conditions fails beacuse the
	change-of-basis matrix \(\left(\begin{smallmatrix}1&1\\1&-1
	\end{smallmatrix}\right)\) has determinant \(-2\), a non-unit of
	\(\Z_N = \Z_{2^n}\), so \(Z\) is not diagonalisable over \(\Z_N\).    The Cayley--Hamilton identity
	holds for any \(2\times 2\) matrix, and the recurrence for \((A_k, B_k)\)
	follows by induction from \(A_{k+1}Z+B_{k+1}I=Z^{k+1} = Z\cdot Z^k = A_k Z^2 + B_k Z
	= A_k\bigl((\operatorname{tr}Z)Z - (\det Z)I\bigr) + B_k Z\).   Since \(Z\)
	is circulant, \(A_k Z + B_k I\) has diagonal entry \(A_k(1+p) + B_k\) and
	off-diagonal entry \(A_k q\), giving the stated criterion.   Part
	\ref{it:rank} is immediate from \((I+J)u = 2u\), \((I+J)v = 0\).   For
	\ref{it:swap-on-v}, \(\beta\beta^t = a^ib^j\cdot a^jb^i = a^{i+j}b^{i+j}\)
	because \(B\) is abelian.
\end{proof}

The following lemma is the wreathed analogue of the enumeration of
involutory units in the cyclic base of the semidihedral case (cf.~\cite[Lemma~2.2]{Aragona}).   It
classifies the admissible \(z\)-actions in terms of the two eigenvalues.

\begin{lemma}\label{lem:zaction}
	Suppose \(B\) is normal in \(G\), and let \(Z\) be the
	matrix~\eqref{eq:Zmatrix} of the \(z\)-action on \(B\), with eigenvalues
	\(\sigma,\mu\) as in~\eqref{eq:eigenvalues}.   Then \(z^M = 1\) forces
	\(Z^M = I\) in \(\mathrm{GL}_2(\Z_N)\), and in particular
	\[
	\sigma^M\equiv 1\pmod N,
	\qquad
	\mu^M\equiv 1\pmod N.
	\]
\end{lemma}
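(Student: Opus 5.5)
The argument is short and entirely functorial, so I will prove the statement directly from the definition of \(Z\) as the matrix of conjugation by \(z\) on \(B\). Since \(B\trianglelefteq G\), conjugation gives a homomorphism \(\rho\colon G\to\operatorname{Aut}(B)\cong\mathrm{GL}_2(\Z_N)\), \(g\mapsto(\beta\mapsto\beta^g)\). By definition \(\rho(z)=Z\). I will fix the convention that exponent vectors transform by left multiplication and conjugation is a right action, so that \(\beta^{z^j}\) corresponds to \(Z^j\) applied to the exponent vector. This uses \((\beta^{z})^{z}=\beta^{z^2}\) and induction on \(j\). Any reversal of order coming from the right action is harmless, because only powers of the single matrix \(Z\) occur.

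The first step is to apply this to \(j=M\). Because \(K=\langle z\rangle\cong C_{2^m}\), we have \(z^M=1\), so \(\beta^{z^M}=\beta\) for every \(\beta\in B\). Hence \(Z^M\) fixes every exponent vector in \(\Z_N^2\), in particular the standard basis vectors \((1,0)\) and \((0,1)\). Therefore \(Z^M=I\) in \(\mathrm{GL}_2(\Z_N)\).

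For the eigenvalue congruences I will use Lemma~\ref{lem:IJ}\ref{it:diag}, which gives \(Zu=\sigma u\) and \(Zv=\mu v\) with \(u=(1,1)\) and \(v=(1,-1)\). Iterating, \(Z^Mu=\sigma^Mu\) and \(Z^Mv=\mu^Mv\). Combined with \(Z^M=I\), this yields \((\sigma^M-1)u=0\) and \((\mu^M-1)v=0\) in \(\Z_N^2\). Reading off the first coordinate, which is \(1\) in both \(u\) and \(v\), gives \(\sigma^M\equiv 1\) and \(\mu^M\equiv1\pmod N\).

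There is essentially no obstacle here. The one point needing care is that \(u\) and \(v\) do not form a basis of \(\Z_N^2\), since the change-of-basis determinant is \(-2\), so the implication cannot be reversed. The argument above only uses the direction from \(Z^M=I\) to the eigenvalue congruences, and for that direction a coordinate equal to \(1\) in each eigenvector suffices. This is consistent with the remark in Lemma~\ref{lem:IJ}\ref{it:diag} that the eigenvalue conditions are necessary but not sufficient.
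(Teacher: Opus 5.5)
Your proof is correct and follows the paper's argument. Since $z^M=1$, conjugation by $z$ is an automorphism of $B$ of order dividing $M$, so $Z^M=I$, and evaluating on the eigenvectors $u=(1,1)$, $v=(1,-1)$ gives $\sigma^M\equiv\mu^M\equiv 1\pmod N$; reading off the first coordinate, which equals $1$, just makes explicit a step the paper leaves implicit.
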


\begin{proof}
Since \(B\) is normal, \(z\) induces an automorphism of \(B\) with matrix
\(Z\), and \(z^M = 1\) forces this automorphism to have order dividing \(M\),
i.e.\ \(Z^M = I\).   Applying \(Z^M = I\) to the eigenvectors \(u, v\) of
Lemma~\ref{lem:IJ}\ref{it:diag} gives \(\sigma^M\equiv\mu^M\equiv 1
\pmod N\). 
\end{proof}

\section{The main theorem}\label{sec:main-thm}

We now state and prove our classification when \(Z\) is symmetric circulant.   Recall \(N = 2^n\),
\(M = 2^m\), and that \(\sigma = 1+p+q\), \(\mu = 1+p-q\) denote the symmetric
and antisymmetric eigenvalues of the \(z\)-action~\eqref{eq:eigenvalues}.

\begin{theorem}\label{thm:main}
	Let \(n\ge 1\), \(m\ge 1\), \(N := 2^n\), \(M := 2^m\).   Set
	\begin{align*}
		H &= \langle a,b,t \mid a^N=b^N=t^2=1,\ [a,b]=1,\ a^t=b,\ b^t=a\rangle
		\cong \Wr_{2^n},\\
		K &= \langle z \mid z^M=1\rangle \cong C_{2^m}.
	\end{align*}
	A tuple \((p,q,r,s,c)\in\Z_N^4\times\Z_M\) defines via the polycyclic
	presentation
	\begin{equation}\label{eq:main-pres}
		G = \left\langle a,b,t,z \,\middle|\,
		\begin{array}{l}
			a^N = b^N = t^2 = z^M = 1,\ [a,b]=1, \\
			a^t = b,\ b^t = a, \\
			a^z = a^{1+p}b^{q},\ b^z = a^{q}b^{1+p}, \\
			t^z = t\,z^{2c}a^{r}b^{s}
		\end{array}
		\right\rangle
	\end{equation}
	an exact product \(G = HK\) with \(H\cong\Wr_{2^n}\), \(K\cong C_{2^m}\) and
	the base \(B = \langle a,b\rangle\) normal in \(G\), if and only if, writing
	\(k := 1-2c\) and \(S_l(X) := I+X+\cdots+X^{l-1}\), for any positive integer \(l\), the tuple satisfies the
	seven conditions
	\begin{align}
		\sigma^M &\equiv 1 \pmod N, \tag{W1}\label{eq:W1}\\
		\mu^M    &\equiv 1 \pmod N, \tag{W2}\label{eq:W2}\\
		\sigma^M &\equiv \mu^M \pmod{2N}, \tag{W3}\label{eq:W3}\\
		(1-2c)^2 &\equiv 1 \pmod M, \tag{W4}\label{eq:W4}\\
		\big(S_k(Z^k)+J\big)\tbinom{r}{s} &\equiv \tbinom00 \pmod N,
		\tag{W5}\label{eq:W5}\\
		Z^{2c} &\equiv I \pmod N, \tag{W6}\label{eq:W6}\\
		S_M(Z^k)\tbinom{r}{s} &\equiv \tbinom00 \pmod N.
		\tag{W7}\label{eq:W7}
	\end{align}
	In particular, every exact product \(G = HK\) with \(H\cong\Wr_{2^n}\),
	\(K\cong C_{2^m}\), \(B\) normal in \(G\), and \(z\)-action \(Z\) commuting with
	the swap \(J\) (equivalently, \(Z\) symmetric circulant; see
	Remark~\ref{rem:two-families}) admits a presentation of the
	form~\eqref{eq:main-pres} with parameters satisfying
	\eqref{eq:W1}--\eqref{eq:W7}.
\end{theorem}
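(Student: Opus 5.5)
The proof splits into a necessity part and a sufficiency part. Both run through the consistency theorem for polycyclic presentations \cite[Lemma~2.5]{Eick2000}, \cite[p.~424]{Sims}. The presentation \eqref{eq:main-pres} is refined to a polycyclic series \(z, t, a, b\) (or \(t\) below \(z\), depending on which ordering makes the relation \(t^z\) a genuine rewriting rule). The plan is to show that the list of consistency overlaps, reduced modulo \(N\) and \(M\), is exactly \eqref{eq:W1}--\eqref{eq:W7}. Exactness then follows from counting. Consistency means \(|G| = 2N^2M\), and \(H=\langle a,b,t\rangle\) and \(K=\langle z\rangle\) have orders \(2N^2\) and \(M\). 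Hence \(|HK| = |G|\) forces \(H\cap K=1\). Normality of \(B\) is built into the relations. For necessity, any such exact product with \(JZ=ZJ\) has the form \eqref{eq:main-pres} by Lemma~\ref{lem:circulant} and Remark~\ref{rem:two-families}. It then suffices to check that each condition is forced by identities holding in \(G\).

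\textbf{Step 1 (the base and \(z\)).} The overlaps involving \(a,b\) and \(z^M\) require \(Z\) to be an automorphism of order dividing \(M\). This gives \(Z^M=I\), whose eigenvalue shadows are \eqref{eq:W1} and \eqref{eq:W2} (Lemma~\ref{lem:zaction}). The refinement \eqref{eq:W3} should encode the off-diagonal part of \(Z^M=I\) that the eigenvalues miss, because the change of basis to \(u,v\) has determinant \(-2\) (Lemma~\ref{lem:IJ}\ref{it:diag}). Concretely, the entries of \(Z^M\) are \(\tfrac12(\sigma^M+\mu^M)\) and \(\tfrac12(\sigma^M-\mu^M)\). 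These are computed as integers, which is legitimate because \(\sigma\) and \(\mu\) are odd. So \(Z^M\equiv I \pmod N\) is equivalent to \(\sigma^M+\mu^M\equiv 2\) and \(\sigma^M-\mu^M\equiv 0\) modulo \(2N\). This matches \eqref{eq:W1}--\eqref{eq:W3} once I check that \eqref{eq:W1} and \eqref{eq:W2} hold with lifted representatives.

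\textbf{Step 2 (the \(t\)--\(z\) interaction).} First I would prove the closed form \(z^t = z^{k}(a^{-r}b^{-s})^{Z^{-2c}}\) (Proposition~\ref{prop:zt}) by inverting \(t^z = tz^{2c}a^rb^s\). Then three overlaps remain.

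\begin{enumerate}[label=(\alph*)]
\item The overlap \(t^2=1\) conjugated by \(z\): the identity \((t^z)^2 = 1\) expands to \(t z^{2c}\beta\, t z^{2c}\beta\) with \(\beta = a^rb^s\). Evaluating it in \(G/B\) gives \eqref{eq:W4}. Its \(B\)-component produces \eqref{eq:W5}, since moving \(z^{2c}\) past \(t\) repeatedly yields \(S_k(Z^k)\) acting on \((r,s)\), plus a \(J\)-term from \(\beta^t\).
\item The overlap between the actions of \(t\) and \(z\) on \(B\): the relation \(a^{tz}=a^{z\,t^z}\) forces \(JZ = Z J Z^{2c}\). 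Since \(JZ=ZJ\), this is \eqref{eq:W6}.
\item The overlap \(z^M=1\) conjugating \(t\): computing \(t^{z^M}\) by iterating the closed form gives \(t\,z^{2cM'}\cdot\) a product of the \(Z^{k}\)-translates of \(\beta\). The \(z\)-part is trivial, and the base part is \(S_M(Z^k)(r,s)^T\), giving \eqref{eq:W7}.
\end{enumerate}

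\textbf{Main obstacle.} The hard part is the bookkeeping in (a) and (c). Each passage of \(t\) past a power of \(z\) introduces a new base element twisted by \(Z^{-2c}\) and \(J\). The difficulty is to show that the accumulated product collapses to the geometric sums \(S_k(Z^k)\) and \(S_M(Z^k)\). My first attempt would be to prove by induction a formula for \(t^{z^j}\) of the form \(t\,z^{(1-k^j)}\,\beta_j\), where \(\beta_j\) is an explicit sum of the \(Z^{k i}\beta\). I would use \eqref{eq:W6} and Lemma~\ref{lem:IJ} to commute \(J\) through. The second point is verifying that no further overlap (e.g.\ \(t\,t\,z\) versus \(t\,z\,z^{M-1}\)) contributes an independent condition. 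I would handle this by observing that these overlaps reduce, modulo the already-derived relations, to (a)--(c).
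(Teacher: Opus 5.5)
\textbf{Where the proposal holds.} Your Step 1 is sound. The integer identity $Z^M=\tfrac12(\sigma^M+\mu^M)I+\tfrac12(\sigma^M-\mu^M)J$ is in fact a cleaner route to \eqref{eq:W1}--\eqref{eq:W3} than the paper's Lucas-sequence argument, since it treats the diagonal entries explicitly. Your identities (a)--(c) all hold in any such $G$, so the necessity direction can be completed.

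\textbf{The gap is in sufficiency.} The overlaps you check, namely $(t^z)^2=1$, $a^{tz}=a^{z\,t^z}$ and $t^{z^M}=t$, are exactly the consistency overlaps for the ordering with $z$ above $t$.
\begin{itemize}
\item In that ordering, $t^z=t\,z^{2c}a^{r}b^{s}$ is not a legal PC rule: its right-hand side contains the higher generator $z^{2c}$. This cannot be repaired unless $2c\equiv 0\pmod M$.
\item A series with $z$ on top would need $\langle t,B\rangle=H$ to be normal in $G$. By Proposition~\ref{prop:zt} this fails in every non-split case.
\end{itemize}
So for $2c\not\equiv 0\pmod M$, \cite[Lemma~2.5]{Eick2000} says nothing about the system you verify. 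Your closing claim that the remaining overlaps ``reduce to (a)--(c)'' is precisely the missing argument. Your hedge about the ordering is unresolved: neither ordering makes $t^z$ a rewriting rule.

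\textbf{The fix.} The ordering is forced to be $t>z>b>a$, along $G\trianglerighteq\langle B,z\rangle\trianglerighteq B$. The conjugation rule must then be the closed form $z^t=z^{k}\beta_0$ with $\beta_0=(a^{-r}b^{-s})^{Z^{-2c}}$; this is why the paper derives it first. The overlaps to check are $z^{t^2}=z$, i.e.\ $(z^t)^t=z$, and $(z^t)^M=1$, plus the triples $b\,z\,t$ and $a\,z\,t$. In this ordering your bookkeeping obstacle disappears:
\begin{itemize}
\item $(z^t)^t=(z^k\beta_0)^k\beta_0^t=z^{k^2}\beta_0^{S_k(Z^k)+J}$ gives \eqref{eq:W4} and \eqref{eq:W5} directly.
\item $(z^t)^M=z^{kM}\beta_0^{S_M(Z^k)}$ gives \eqref{eq:W7}.
\item Comparing $a^{zt}$ with $a^{t\,z^t}$ gives $Z^{k-1}=I$, i.e.\ \eqref{eq:W6}.
\end{itemize}

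\textbf{A secondary inaccuracy in (a).} Your (a) does not yield $S_k(Z^k)$ as claimed. Writing $(tz^{2c}\beta)^2=(z^t)^{2c}\beta^t z^{2c}\beta$ produces the sum $S_{2c}(Z^k)$. Under \eqref{eq:W6} the condition becomes $(I+J-S_{2c}(Z))\tbinom{r}{s}\equiv\tbinom{0}{0}\pmod N$. This agrees with \eqref{eq:W5} only after invoking \eqref{eq:W7} and the identity $S_k(Z)+Z^kS_{2c}(Z)=S_{M+1}(Z)$. Here $k+2c=M+1$ for canonical residues with $2c\not\equiv 0$. So your conditions match \eqref{eq:W1}--\eqref{eq:W7} only as a whole system, not overlap by overlap.
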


The proof of Theorem~\ref{thm:main} -- both the necessary and the
sufficient direction -- is based on a closed form for \(z^t\) in
\(G\) described from the following proposition.   It expresses the involution \(t\) acting on
\(z\) modulo \(B\) in terms of the parameter \(c\), and identifies precisely
when the resulting quotient \(G/B\) is abelian. 
\begin{proposition}\label{prop:zt}
	In the group~\eqref{eq:main-pres}, with \(B\) normal,
	\begin{equation}\label{eq:zt}
		z^{t} \;\equiv\; z^{\,1-2c} \pmod{B},
	\end{equation}
	i.e.\ there exists \(\beta\in B\) with \(z^t = z^{1-2c}\beta\).
	Writing
	\(k := (1-2c)\pmod M\), the quotient \(G/B\) is abelian and \(H\) is normal
	in \(G\) precisely when \(k\equiv 1\pmod M\), equivalently \(2c\equiv 0\pmod
	M\).   Otherwise \(G/B\) is non-abelian of order \(2M\) with the cyclic
	subgroup \(\langle zB\rangle\) of index \(2\), and is one of three
	types -- dihedral, semidihedral, or modular maximal-cyclic --
	according to the residue \(k\pmod M\). 
	In all
	non-split cases \(H\) is not normal in \(G\).
\end{proposition}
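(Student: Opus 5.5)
The plan is to work throughout in the quotient \(\bar G = G/B\), which has order \(2M\) and is generated by \(\bar t\) and \(\bar z\). By the first part of the proof of Lemma~\ref{lem:circulant}, \(\bar z\) has order exactly \(M\) and \(\bar t\) has order \(2\).

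\emph{The congruence \eqref{eq:zt}.} The relation \(t^z = t z^{2c}a^rb^s\) reduces modulo \(B\) to \(\bar z^{-1}\bar t\bar z = \bar t\bar z^{2c}\). Multiplying on the left by \(\bar t\) (an involution) gives \(\bar t\bar z^{-1}\bar t\,\bar z = \bar z^{2c}\), hence \(\bar t\bar z^{-1}\bar t = \bar z^{2c-1}\). Inverting yields \(\bar z^{\bar t} = \bar z^{1-2c}\). Lifting to \(G\), this says \(z^t = z^{1-2c}\beta\) for some \(\beta\in B\), which is \eqref{eq:zt}. The explicit \(\beta\) mentioned in the introduction is not needed for this statement and can be deferred.

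\emph{The abelian case and normality of \(H\).} Since \(\bar G=\langle \bar t,\bar z\rangle\), the quotient is abelian iff \(\bar t\) and \(\bar z\) commute, i.e.\ iff \(\bar z^{1-2c}=\bar z\), i.e.\ iff \(2c\equiv0\pmod M\), i.e.\ iff \(k\equiv 1\pmod M\). Next, \(H/B=\langle\bar t\rangle\), and \(H\trianglelefteq G\) iff \(\langle \bar t\rangle\trianglelefteq \bar G\), because \(B\le H\). Now \(\langle\bar t\rangle\) has order \(2\), so it is normal iff it is central, iff \(\bar t^{\bar z}=\bar t\), iff \(\bar z^{2c}=1\). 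The last equivalence uses \(\bar t^{\bar z}=\bar t\bar z^{2c}\). So \(H\trianglelefteq G\) exactly when \(2c\equiv0\pmod M\). This simultaneously shows that in every non-split case (\(2c\not\equiv 0\)) \(H\) is not normal. Conversely, when \(2c\equiv 0\) we get \(t^z=ta^rb^s\in H\), so \(G=H\rtimes K\).

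\emph{The non-abelian types.} When \(k\not\equiv1\), \(\bar G\) is non-abelian of order \(2M\), and \(\langle\bar z\rangle\) is cyclic of index \(2\), hence normal. Since \(\bar t\notin\langle \bar z\rangle\) has order \(2\) and \(\langle\bar t\rangle\cap\langle\bar z\rangle=1\), the extension splits:
\[
\bar G\cong C_M\rtimes_k C_2=\langle x,y\mid x^M=y^2=1,\ x^y=x^k\rangle .
\]
Lemma~\ref{lem:circulant} gives \(k^2\equiv1\pmod M\) with \(k\) odd. The nontrivial involutions listed in its proof are \(k=M-1\), giving the dihedral group; \(k=M/2-1\), giving the semidihedral group; and \(k=M/2+1\), giving the modular maximal-cyclic group, the last two occurring only for \(M\ge 8\). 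For \(M=4\) only \(k=3\) occurs, which is dihedral. These three groups are standard and non-isomorphic, so the type is determined by \(k\bmod M\).

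\emph{Main obstacle.} The only delicate point is bookkeeping. One must check that the quotient presentation is really \(\langle \bar t,\bar z\rangle\) of order exactly \(2M\), which follows from exactness as in Lemma~\ref{lem:circulant}. One must also confirm that "normal" for a subgroup of order \(2\) coincides with "central". The rest is routine.
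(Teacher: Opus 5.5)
Your proposal is correct and follows essentially the same route as the paper. You pass to \(\bar G=G/B\) and obtain \(\bar z^{\bar t}=\bar z^{1-2c}\) from the relation for \(t^z\); the paper cites this from Lemma~\ref{lem:circulant}, while you rederive it directly. You then read off both abelianness of \(\bar G\) and normality of \(H\) from whether \(\bar z^{2c}=1\), and classify the non-abelian quotient by the involution \(k\). Your version is slightly more careful than the paper's in three places: you justify non-normality of \(H\) via ``a normal subgroup of order \(2\) is central''; you make the splitting \(\bar G\cong C_M\rtimes_k C_2\) explicit, which is what rules out the generalised quaternion group; and you note that for \(M=4\) only the dihedral type occurs.
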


\begin{proof}
	Lemma~\ref{lem:circulant} applied to the presentation~\eqref{eq:main-pres}
	gives \(\bar z^{\bar t} = \bar z^{k}\) for some involution
	\(k\in\Z_M\), and identifies \(k\) as \(k \equiv 1-2c\pmod M\)
	(Step~3 of the proof of Lemma~\ref{lem:circulant}).   Lifting back to
	\(G\), this means \(z^t \in z^{1-2c} B\), which is~\eqref{eq:zt}.
	
	Notice that \(\bar G = G/B\) has order \(2M\) and is generated by
	\(\bar t\) (of order \(2\)) and \(\bar z\) (of order \(M\), see the proof of
	Lemma~\ref{lem:circulant}), with \(\bar z^{\bar t} = \bar z^{k}\).
	\begin{itemize}[leftmargin=2em]
		\item If \(k\equiv 1\pmod M\), then \(\bar z^{\bar t} = \bar z\) and
		\(\bar G\) is abelian.   In this case \(\bar H := H/B = \langle\bar
		t\rangle\) has order \(2\) and is central in \(\bar G\), whence
		\(H\trianglelefteq G\).
		\item If \(k\not\equiv 1\pmod M\), then \(\bar G\) is a non-abelian
		extension of \(\langle\bar z\rangle\cong C_M\) by \(\langle\bar
		t\rangle\cong C_2\), with \(\bar t\) acting on \(\bar z\) by an order-\(2\)
		automorphism distinct from the identity.   For \(M\ge 4\) the three
		non-trivial involutions of \(\Z_M\) yield the three types of non-abelian
		order-\(2M\) \(2\)-groups with cyclic subgroup of index \(2\), that is, dihedral \(D_{2M}\) if \(k\equiv M-1 \pmod M\),
		semidihedral \(SD_{2M}\) if \(k\equiv M/2-1 \pmod M\), modular maximal-cyclic
		\(M_{2M}\) if \(k\equiv M/2+1\pmod M\) \textnormal{(}the first two have maximal
		class for \(M\ge 8\), the last has class \(2\)\textnormal{)}.   In all
		these cases \(\langle\bar t\rangle\) is not normal in \(\bar G\), so
		\(H\) is not normal in \(G\).\qedhere
	\end{itemize}
\end{proof}

\begin{remark}
	The parameter \(c\) governs the split/non-split dichotomy (see
	Proposition~\ref{prop:zt}). When \(2c\equiv 0\pmod M\) the product is
	split, \(G = H\rtimes K\), and \eqref{eq:W4}--\eqref{eq:W7} reduce to the
	 congruences of the split case
	(see Section~\ref{sec:split-recover}); otherwise the product is
	non-split, with \(G/B\) a non-abelian \(2\)-group of order \(2M\) with a
	cyclic subgroup of index \(2\).   For \(M\le 4\) the split
	condition \(2c\equiv 0\pmod M\) coincides with \(c\) even, recovering the
	parity dichotomy familiar from the (semi)dihedral classifications.
	The hypothesis \(M\ne 4\) is needed for
	sufficiency, indeed, when \(M = 4\) the value \(k \equiv 1-2c\equiv -1 \pmod M\) is independent
	of the odd residue of \(c\), condition~\eqref{eq:W4} is vacuous, and the
	solution set is no longer cut out by \eqref{eq:W1}--\eqref{eq:W7} alone;
	this exceptional case is analysed in Section~\ref{sec:M4}.
\end{remark}


We organise the proof into the necessary condition
(Subsection~\ref{sec:nec}) and the sufficient condition via the
consistency theorem (Subsection~\ref{sec:suff}).   

\subsection{Necessary condition}\label{sec:nec}

Assume \(G\) is an exact product as in the statement, with \(B\) normal in
\(G\).   By Lemma~\ref{lem:circulant}, the
\(z\)-action on \(B\) is represented by the
circulant matrix~\eqref{eq:Zmatrix} and \(t^z = t z^{2c} a^rb^s\) for some
positive integers \(r,s\).   Throughout we use the eigenbasis \(u = (1,1)\), \(v = (1,-1)\) of
Lemma~\ref{lem:IJ}, on which \(Z\) acts by \(\sigma = 1+p+q\) and
\(\mu = 1+p-q\) respectively.

We obtain the congruences~\eqref{eq:W1}--\eqref{eq:W7} by expanding four group identities.

\paragraph{Identity \((\mathcal I_1)\): \(a^{z^{M}} = a\) and
	\(b^{z^{M}} = b\), from \(z^M = 1\).}
The relation \(z^M = 1\) forces the \(z\)-action on \(B\) to have order
dividing \(M\), that is,
\begin{equation}\label{eq:ZM}
	Z^M = I \quad\text{in}\quad \mathrm{GL}_2(\Z_N).
\end{equation}
This matrix identity is the source of~\eqref{eq:W1}, \eqref{eq:W2}
and~\eqref{eq:W3} together. We emphasise that \(Z\) cannot be diagonalised over \(\Z_N\). Indeed, every eigenvector associated with the eigenvalues \(\sigma\) is a scalar multiple of \(u = (1,1)\), while every eigenvector associated with \(\mu\) is a scalar multiple of \(v = (1,-1)\). Hence every eigenvector matrix has determinant divisible by \(2\), which is not a
unit of \(\Z_N = \Z_{2^n}\) and therefore is not invertible.
Consequently the matrix condition~\eqref{eq:ZM} is strictly stronger than
the pair of scalar conditions \(\sigma^M\equiv\mu^M\equiv 1 \pmod N\).   Passing
to the eigenvalues yields the necessary conditions \(\sigma^M\equiv 1 \pmod N\)
and \(\mu^M\equiv 1 \pmod N\), which are~\eqref{eq:W1}
and~\eqref{eq:W2}.  
We will deal with~\eqref{eq:W3} in \((\mathcal I_5)\) below.

\paragraph{Identity \((\mathcal I_2)\): the closed form for \(z^t\).}
By Proposition~\ref{prop:zt}, \(z^t = z^{k}\beta\), for some \(\beta\in B\),
where \(k := 1-2c\) satisfies \(k^2\equiv 1\pmod M\) by
Lemma~\ref{lem:circulant}.   We now identify \(\beta\) explicitly.

Start from the defining relation
\(t^z = z^{-1}tz = t\,z^{2c}a^rb^s\).
Multiplying on the left by \(t\) and using \(t^2 = 1\) gives
\(t\,z^{-1}\,t\,z = z^{2c}a^rb^s\).   Since \(t\,z^{-1}\,t = (z^{-1})^t =
(z^t)^{-1}\), this becomes \((z^t)^{-1}\,z = z^{2c}a^rb^s\), i.e.\
\[
z^t \;=\; z\cdot(z^{2c}a^rb^s)^{-1}
\;=\; z\cdot a^{-r}b^{-s}\cdot z^{-2c}.
\]
In the semidirect product \(B\rtimes\langle z\rangle\) the rewriting rule
\(\gamma\,z^j = z^j\,\gamma^{Z^{j}}\) holds for every \(\gamma\in B\) and
\(j\in\Z\); applied to \(\gamma = a^{-r}b^{-s}\) and \(j = -2c\), it yields
\(a^{-r}b^{-s}\cdot z^{-2c} = z^{-2c}\cdot(a^{-r}b^{-s})^{Z^{-2c}}\).
Hence
\begin{equation}\label{eq:zt-general}
	z^t \;=\; z\cdot z^{-2c}\cdot (a^{-r}b^{-s})^{Z^{-2c}}
	\;=\; z^{1-2c}\cdot (a^{-r}b^{-s})^{Z^{-2c}}.
\end{equation}
This identifies the \(B\)-component
\(\beta = (a^{-r}b^{-s})^{Z^{-2c}}\) of \(z^t\).   The
closed form~\eqref{eq:zt-general} is the basis for the identities
\((\mathcal I_3)\) and \((\mathcal I_4)\) below.

\paragraph{Identity \((\mathcal I_3)\): \((z^t)^t = z\) and the order of	\(\bar z\), giving~\eqref{eq:W4}.}
Applying the conjugation by \(t\) to~\eqref{eq:zt-general} modulo\(B\), and using \(t^2 = 1\), we obtain
\( \bar{z}=(\bar{z})^{t^2}= \bar{z}^{(1-2c)^2}\), so that
\[
(1-2c)^2\equiv 1\pmod M,
\]
which is~\eqref{eq:W4}.   

\paragraph{Identity \((\mathcal I_4)\): the base part of \((z^t)^t = z\),
	giving~\eqref{eq:W5}.}
We now retain the  \(B\)-part of the identity \(z = (z^t)^t\).   Recalling that \(k := 1-2c\) and
\(\beta := (a^{-r}b^{-s})^{Z^{-2c}}\) for the \(B\)-component
of~\eqref{eq:zt-general}, we can write \(z^t = z^k\beta\).   Applying \(t\) and
using \(t^2 = 1\):
\begin{equation}\label{eq:ztt}
z = (z^t)^t = (z^k\beta)^t = (z^t)^k\,\beta^t = (z^k\beta)^k\,\beta^t.
\end{equation}

We need to expand the power \(\bigl(z^{k}\beta\bigr)^{k}\) inside the semidirect
product \(B\rtimes\langle z\rangle\), in which \(\beta\,z = z\,\beta^{Z}\)
for \(\beta\in B\), so that \(z^{j}\beta = \beta^{Z^{-j}}z^{j}\).
Notice that, computing the square of \(z^k \beta\), we get
\[(z^k \beta) (z^k \beta) = (z^k \beta) (\beta^{Z^{-k}} z^k ) = z^k (\beta^{I+Z^{-k}}z^k)= z^k (z^k \beta^{(I+Z^{-k})Z^k})=z^{2k} \beta^{I+Z^k}.\]
In general 
collecting the \(k\) base factors past the powers of \(z\), we obtain 
\begin{equation}\label{eq:power-expand}
	\bigl(z^{k}\beta\bigr)^{k}
	= z^{k^2}\,\beta^{\,I + Z^{k} + Z^{2k} +
		\cdots + Z^{(k-1)k}}
	= z^{k^2}\,\beta^{S_k(Z^{k})},
\end{equation}
where \(S_k(X) := I + X + \cdots + X^{k-1}\) and we used that the
exponents of \(z\) accumulate as \(k+k+\cdots+k = k^2\) while each
successive base factor is conjugated by an additional \(Z^{k}\).
By~\eqref{eq:W4}, \(k^2\equiv 1\pmod M\), so the \(z\)-part
of~\eqref{eq:power-expand} is exactly \(z\).   Substituting
into Equation~\ref{eq:ztt} and canceling \(z\), we have the base
identity
\[
\beta^{S_k(Z^k)}\cdot\beta^t = 1\quad\text{in \(B\),}
\]
which in additive notation 
reads
\begin{equation}\label{eq:W5-from-I4}
	\bigl(S_k(Z^k)+J\bigr)\binom{r}{s}\equiv\binom{0}{0}\pmod N,
\end{equation}
that is~\eqref{eq:W5}.


\paragraph{Identity \((\mathcal I_5)\): the off-diagonal condition, via
	Cayley--Hamilton, giving~\eqref{eq:W3}}
Recall that~\eqref{eq:W3}  is the precise additional
content of the matrix identity \(Z^M = I\) beyond the scalar eigenvalue
conditions \eqref{eq:W1}--\eqref{eq:W2}.   

By Lemma~\ref{lem:IJ}, writing \(Z^M = A_M Z + B_M I\), with
sequences \(A_k, B_k\) determined by 
\[
A_{k+1} = (\operatorname{tr}Z)\,A_k + B_k,
\qquad
B_{k+1} = -(\det Z)\,A_k,
\]
 where
\(\operatorname{tr}Z = \sigma+\mu\) and \(\det Z = \sigma\mu\), the identity
\(Z^M = I\) is equivalent to the two scalar conditions
\begin{equation}\label{eq:offdiag}
	A_M\,q\equiv 0 \pmod N
	\quad\text{(off-diagonal)},
	\qquad
	A_M(1+p) + B_M\equiv 1 \pmod N
	\quad\text{(diagonal)}.
\end{equation}
Since \(\sigma,\mu\) are the roots of the characteristic polynomial
\(x^2 - (\operatorname{tr}Z)x + \det Z\), 
the sequence \(A_k=(\operatorname{tr}Z)\,A_{k-1} -(\det Z)\,A_{k-2}\) is the associated Lucas \(U\)-sequence; 
for \(\sigma\ne\mu\), it has the closed form
\begin{equation}\label{eq:lucas}
	A_M = \frac{\sigma^M - \mu^M}{\sigma - \mu}\,.
\end{equation}
Now \(\sigma - \mu = 2q\).   Multiplying~\eqref{eq:lucas} by \(q\) and using
that the fraction is an integer division in \(\Z\) (since \(A_M\in
\Z\) via the Lucas recurrence, and the right-hand side equals
\(A_M\cdot q\)), we obtain
\[
A_M\,q \;=\; \frac{\sigma^M-\mu^M}{\sigma-\mu}\cdot\frac{\sigma-\mu}{2}
\;=\; \frac{\sigma^M-\mu^M}{2} 
\]
note that the quantity \(\frac{\sigma^M-\mu^M}{2}\) is an integer because
\(\sigma-\mu\) is even, and so \(\sigma^M-\mu^M\) is even.   Reducing
modulo \(N\), the off-diagonal condition \(A_M q\equiv 0\pmod N\)
in~\eqref{eq:lucas} becomes
\[
\frac{\sigma^M-\mu^M}{2}\equiv 0\pmod N,
\quad\text{that is,}\quad
\sigma^M\equiv\mu^M\pmod{2N},
\]
which is~\eqref{eq:W3}.   When \(\sigma = \mu\), i.e.\ \(q = 0\), the matrix \(Z\) is diagonal,
\eqref{eq:W3} reduces to the tautology \(\sigma^M=\sigma^M\), and
\(Z^M = I\) is equivalent to \eqref{eq:W1} alone.

\paragraph{Identity \((\mathcal I_6)\): symmetric circulant constraint, giving~\eqref{eq:W6}.}
The presentation~\eqref{eq:main-pres} forces the \(z\)-action matrix to
take the symmetric circulant form \(Z = \left(\begin{smallmatrix}1+p&q
	\\q&1+p\end{smallmatrix}\right)\), which commutes with the swap \(J\).
Combined with Lemma~\ref{lem:circulant}, which gives \(JZJ = Z^k\) for
every exact product with \(B\) normal, this forces
\[
Z \;=\; J Z J \;=\; Z^{k},
\]
equivalently \(Z^{k-1} = I\) in \(\mathrm{GL}_2(\Z_N)\).   Since \(k - 1 =
-2c\) and \(Z\in\mathrm{GL}_2(\Z_N)\) is invertible, this is
\[
Z^{2c} \equiv I \pmod N,
\]
which is~\eqref{eq:W6}.   Equivalently, on the eigenline basis
\(\{u,v\}\) of \(J\) -- where \(Z\) acts as \(\sigma\) and \(\mu\) respectively
-- this amounts to the pair of scalar conditions \(\sigma^{2c}\equiv 1\)
and \(\mu^{2c}\equiv 1\pmod N\).

As a consequence of~\eqref{eq:W6}, the closed form~\eqref{eq:zt-general}
simplifies since \((a^{-r}b^{-s})^{Z^{-2c}} =
a^{-r}b^{-s}\)
 and therefore
\begin{equation}\label{eq:zt-simple}
	z^t \;=\; z^{1-2c}\,a^{-r}b^{-s}.
\end{equation}
This simplified form will be used freely below.

\paragraph{Identity \((\mathcal I_7)\): \((z^t)^M = 1\), giving~\eqref{eq:W7}.}
Since \(z^t\) is conjugate to \(z\) in \(G\) and \(z\) has order \(M\), the power
relation \((z^t)^M = 1\) must hold.   Using~\eqref{eq:zt-simple}, write
\(z^t = z^k\,\beta_0\) with \(k = 1-2c\) and \(\beta_0 := a^{-r}b^{-s}\), and
expand in \(B\rtimes\langle z\rangle\) using the same collection rule as
in \((\mathcal I_4)\):
\[
(z^k\beta_0)^M = z^{kM}\,\beta_0^{\,I + Z^k + Z^{2k} + \cdots + Z^{(M-1)k}}
= z^{kM}\,\beta_0^{S_M(Z^k)}.
\]
Since \(z^M = 1\), \(z^{kM} = 1\).   Hence \((z^t)^M = 1\) is equivalent to
\(\beta_0^{S_M(Z^k)} = 1\) in \(B\), which in additive notation
(\(\beta_0\) corresponds to \(-\binom{r}{s}\)) reads
\[
S_M(Z^k)\binom{r}{s} \equiv \binom{0}{0}\pmod N,
\]
which is~\eqref{eq:W7}.\qed

\subsection{Sufficient condition via consistency theorem}\label{sec:suff}

Before invoking the consistency theorem, we recall that the groups
under consideration are polycyclic and exhibit the series underlying the
PC-presentation.

\begin{lemma}\label{lem:polycyclic}
	The wreathed \(2\)-group \(W = \Wr_{2^n} = (C_{2^n}\times C_{2^n})\rtimes
	C_2\) is polycyclic.   More precisely, with \(B = \langle a\rangle\times
	\langle b\rangle\) its base and \(t\) the swap involution, the subnormal
	series
	\begin{equation}\label{eq:W-series}
		W \;\trianglerighteq\; B \;\trianglerighteq\; \langle a\rangle
		\;\trianglerighteq\; \{1\}
	\end{equation}
	has cyclic quotients \(W/B\cong C_2\), \(B/\langle a\rangle\cong C_{2^n}\)
	and \(\langle a\rangle\cong C_{2^n}\).   Consequently any exact product
	\(G = HK\) with \(H\cong\Wr_{2^n}\) and \(K = \langle z\rangle\cong C_{2^m}\)
	in which the base \(B\) is normal is polycyclic, with the following refined series
		\begin{equation}\label{eq:G-series}
		G \;\trianglerighteq\; \langle B, z\rangle
		\;\trianglerighteq\; B = \langle a,b\rangle
		\;\trianglerighteq\; \langle a\rangle
		\;\trianglerighteq\; \{1\}.
	\end{equation}
\end{lemma}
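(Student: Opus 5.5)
The plan is to verify the two assertions of Lemma~\ref{lem:polycyclic} in turn: first that \(W\) is polycyclic via the series~\eqref{eq:W-series}, and then that \(G\) is polycyclic via the refined series~\eqref{eq:G-series}. In both cases we exhibit the series, check that each term is normal in the preceding one, and identify the successive quotients as cyclic.

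For \(W\), we start from the presentation~\eqref{eq:W-def}. The base \(B=\langle a,b\rangle\) has index \(2\), so it is normal with \(W/B=\langle tB\rangle\cong C_2\). The subgroup \(\langle a\rangle\) is normal in the abelian group \(B\) (it is not normal in \(W\), but only subnormality is needed). Since \(B=\langle a\rangle\times\langle b\rangle\), we get \(B/\langle a\rangle\cong\langle b\rangle\cong C_{2^n}\), and \(\langle a\rangle\cong C_{2^n}\). All quotients are cyclic, so \(W\) is polycyclic. Alternatively, every finite solvable group is polycyclic, and \(W\) is a \(2\)-group; the explicit series is still recorded because it fixes the PC generators \(t,a,b\).

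For \(G\), set \(L:=\langle B,z\rangle\). Because \(B\trianglelefteq G\), we have \(L=B\langle z\rangle\). This subgroup has order \(N^2M\), since \(B\cap\langle z\rangle\subseteq H\cap K=1\). Exactness gives \(|G|=|H||K|=2N^2M\), so \(L\) has index \(2\) in \(G\). It is therefore normal, with \(G/L\) generated by the image of \(t\), hence \(G/L\cong C_2\). Next, \(B\trianglelefteq L\) holds because \(B\trianglelefteq G\) by hypothesis. The quotient is \(L/B=\langle zB\rangle\), and it is cyclic of order \(M\) because \(\langle z\rangle\) maps injectively into \(G/B\), as in the proof of Lemma~\ref{lem:circulant}. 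The two bottom steps \(B\trianglerighteq\langle a\rangle\trianglerighteq 1\) are exactly the ones already verified for \(W\). This gives a subnormal series with cyclic factors \(C_2,C_M,C_N,C_N\), so \(G\) is polycyclic.

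The only step that is not pure bookkeeping is the index computation \([G:L]=2\). It relies on the order count \(|G|=|H||K|\) and on \(B\langle z\rangle\) being a subgroup, the latter following from normality of \(B\). I would make that explicit.
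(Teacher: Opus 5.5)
Your proof is correct and follows essentially the same route as the paper. The only difference is cosmetic: the paper obtains the top step in the quotient, noting that $\langle zB\rangle$ has index $2$ in $G/B$ (of order $2M$). You instead compute $[G:B\langle z\rangle]=2$ directly from $|B\langle z\rangle|=N^2M$ and $|G|=2N^2M$, which is the same fact via the correspondence theorem.
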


\begin{proof}
	Since \(B\) is abelian and \(W\) is a semidirect product \((C_{2^n}\times C_{2^n})\rtimes
	C_2\), we have that~\eqref{eq:W-series} is a subnormal series with syclic quotients.
	
	For the exact product \(G = HK\) with \(B\) normal, the base \(B\) is normal
	in \(G\) by hypothesis, and the quotient \(G/B\) has order \(|G|/|B| = 2M\).
	It is generated by the images \(\bar z\) (of order  \(M\)) and
	\(\bar t\) (of order \(2\)); the subgroup \(\langle\bar z\rangle\) has index
	\(2\) in \(G/B\), hence is normal, so \(G/B\) is an extension of
	\(\langle\bar z\rangle\cong C_{M}\) by \(\langle\bar t\rangle\cong C_2\),
	that is, abelian (when \(2c\equiv 0\pmod M\)) or non-abelian of order
	\(2M\) with \(\langle\bar z\rangle\) a cyclic subgroup of index \(2\), 
	by Proposition~\ref{prop:zt}.   Refining \(B\trianglelefteq G\) through the normal
	subgroup \(\langle\bar z\rangle\) of \(G/B\) and then through the
	series~\eqref{eq:W-series} of \(B\) gives the polycyclic series
	\begin{equation}
		G \;\trianglerighteq\; \langle B, z\rangle
		\;\trianglerighteq\; B = \langle a,b\rangle
		\;\trianglerighteq\; \langle a\rangle
		\;\trianglerighteq\; \{1\},
	\end{equation}
	with cyclic quotients \(C_2\) (namely \(G/\langle B,z\rangle =
	\langle\bar t\rangle\)), \(C_{2^m}\) (namely \(\langle B,z\rangle/B =
	\langle\bar z\rangle\)), \(C_{2^n}\) and \(C_{2^n}\) from top to bottom.
	Hence \(G\) is polycyclic of order \(2^{m}\cdot 2^{2n+1} = 2N^2M\).
	
	We remark that the polycyclicity of \(G\) also follows abstractly from
	\(G\) being a finite \(2\)-group -- hence nilpotent, so solvable, and a
	finite solvable group is polycyclic. 
	The explicit series~\eqref{eq:G-series}, however,
	is the one adapted to the generators of the PC-presentation, valid
	precisely because \(B\trianglelefteq G\), and it is this adaptation that
	we use below.
\end{proof}

The series~\eqref{eq:G-series} is the one underlying our
PC-presentation.   We now view~\eqref{eq:main-pres} as a polycyclic
presentation, also called PC-presentation
(see~\cite[Section~2.3]{Eick2000}
or~\cite[Definition~8.7]{HoltEickOBrien2005}), and verify that it
satisfies all the relevant consistency conditions (see~\cite[Lemma~2.5]{Eick2000}).

Order the generators according to the series~\eqref{eq:G-series}, with
\( t\) on top, then \( z\), then the base:
\[
g_1 := t,\quad g_2 := z,\quad g_3 := b,\quad g_4 := a,
\]
with relative orders \(e_1 = 2\), \(e_2 = M\), \(e_3 = N\), \(e_4 = N\).   Each
element of \(G\) has a unique normal form
\(g_1^{d_1}g_2^{d_2}g_3^{d_3}g_4^{d_4} = t^{d_1}z^{d_2}b^{d_3}a^{d_4}\)
with \(0\le d_i < e_i\).   With this ordering the conjugate of the
lower generator \(g_2 = z\) by the higher \(g_1 = t\) is expressed in normal
form using Proposition~\ref{prop:zt}, which is the reason the closed
form~\eqref{eq:zt-general} -- rather than the defining relation \(t^z\)
-- is the natural datum for the consistency check.

By condition~\eqref{eq:W6}, \(Z^{2c}\equiv I\pmod N\), so
\((a^{-r}b^{-s})^{Z^{-2c}} = a^{-r}b^{-s}\) and~\eqref{eq:zt-general}
simplifies to the closed form~\eqref{eq:zt-simple}, namely
\(z^t = z^{1-2c}\,a^{-r}b^{-s}\).   The conjugate relations (for \(i<j\),
with the result in normal form involving generators of index \(\ge i\))
are
\begin{equation}\label{eq:conj-relations}
	\begin{aligned}
		g_2^{g_1} &= z^t = z^{1-2c}a^{-r}b^{-s}
		= g_2^{1-2c}g_3^{-s}g_4^{-r},\\
		g_3^{g_1} &= b^t = a = g_4,\\
		g_4^{g_1} &= a^t = b = g_3,\\
		g_3^{g_2} &= b^z = a^{q}b^{1+p} = g_3^{1+p}g_4^{q},\\
		g_4^{g_2} &= a^z = a^{1+p}b^{q} = g_3^{q}g_4^{1+p},\\
		g_4^{g_3} &= a^b = a = g_4 \quad (\text{since } [a,b]=1).
	\end{aligned}
\end{equation}
The power relations are \(g_i^{e_i} = 1\) for all \(i\).

By the consistency theorem for polycyclic presentations
(see~\cite[Lemma~2.5]{Eick2000}, with the proof attributed to
\cite[p.~424]{Sims}), the PC-presentation is consistent -- equivalently,
defines a group of order \(e_1e_2e_3e_4 = 2N^2M\) -- if and only if a
finite list of overlap identities, derived from the rewriting rules,
reduce to the same normal form along all reduction paths.   We organise
them, as in~\cite[Lemma~2.5(c)]{Eick2000}, considering the following consistency identities
\begin{itemize}
	\item[(P)] Identities \(g_j^{\,g_i^{e_i}} = g_j\), for \(j >i\)
	\item[(W)] Identities \(\bigl(g_j^{g_i}\bigr)^{e_j} = \bigl(g_j^{e_j}\bigr)^{g_i}\),  for \(j >i\)
	\item[(C)] Identities \(g_k^{\,g_jg_i} = (g_k^{\,g_j})^{g_i}\), for \(k>j>i\)
\end{itemize}
and verify that under \eqref{eq:W1}--\eqref{eq:W7} (with \(M\ne 4\))
each holds.

\begin{remark}\label{rem:corcon}
	The families (P) and (W) are respectively derived from the third and the second identities in \cite[Lemma~2.5(c)]{Eick2000}, namely for \(i < j\)
	\[
	g_j(g_i^{x_i})=(g_jg_i)g_i^{x_{i-1}},
	\qquad
	(g_j^{x_j})g_i=g_j^{x_{j-1}}(g_jg_i).
	\]
	Note that in our setting, the exponents \(x_i\) coincide with the orders of the corresponding generators. Moreover, repeatedly applying the identity
	\(
	g_jg_i=g_i\,g_j^{g_i}
	\)  yields
	\[
	(g_jg_i)g_i^{x_{i-1}}
	=(g_ig_j^{g_i})g_i^{x_{i-1}}
	=(g_i^2g_j^{g_i^2})g_i^{x_{i-2}}
	=\cdots
	=g_i^{x_i}g_j^{g_i^{x_i}},
	\]
	from which the family (P) follows; the derivation of (W) is analogous.
	
	The family (C) follows immediately from the first identity \(g_k(g_j g_i)=(g_kg_j)g_i\), with \(k>j>i\), in \cite[Lemma~2.5]{Eick2000}. As for the remaining two identities in that lemma, the fourth \((g_i^{x_i})g_i=g_i(g_i^{x_i})\) is trivially satisfied in our setting, while the last does not apply because all orders \(e_i\) are finite.
\end{remark}

\paragraph{(P) Identities \(g_j^{\,g_i^{e_i}} = g_j\), for \(i<j\).}
Iterating the conjugation rule \(g_j^{g_i}\) a total of \(e_i\) times must
return \(g_j\).

\emph{Pairs with \(i = 1\) (conjugation by \(t\), twice).}   We must check
\(g_j^{\,t^2} = g_j\) for \(j = 2,3,4\).   For \(j = 3,4\) this is \(J^2 = I\) on
the base, which holds identically.   For \(j = 2\) it is \((z^t)^t = z\).
Using~\eqref{eq:conj-relations}, \((z^t)^t = (z^{1-2c}a^{-r}b^{-s})^t
= (z^t)^{1-2c}a^{-s}b^{-r}\); expanding the power as in \((\mathcal I_4)\)
and using \eqref{eq:W4} (\(k^2\equiv 1\bmod M\), so the \(z\)-part is \(z\))
and \eqref{eq:W5} (so the accumulated base part is trivial) gives
\((z^t)^t = z\), as required.   The simplified closed
form~\eqref{eq:zt-simple} used here is itself valid by~\eqref{eq:W6}.
This is exactly where \eqref{eq:W4} and~\eqref{eq:W5} are used.

\emph{Pairs with \(i = 2\) (conjugation by \(z\), \(M\) times).}   We must
check \(g_j^{\,z^M} = g_j\) for \(j = 3,4\), which is the matrix identity
\(Z^M = I\) on the base.   By \((\mathcal I_1)\) and \((\mathcal I_5)\) this is
equivalent to \eqref{eq:W1}, \eqref{eq:W2} and \eqref{eq:W3} together.

\emph{Pair \((3,4)\).}   \(a^{\,b^N} = a\) holds identically since
\([a,b]=1\).

\begin{center}
	\begin{tabular}{lll}
		\toprule
		Pair \((i,j)\) & Reduces to & Outcome \\
		\midrule
		\((1,2)\) & \((z^t)^t = z\) & \eqref{eq:W4} \mbox{ and } \eqref{eq:W5} \\
		\((1,3)\) & \(b^{t^2} = b\) & holds, \(J^2 = I\) \\
		\((1,4)\) & \(a^{t^2} = a\) & holds, \(J^2 = I\) \\
		\((2,3)\) & \(b^{z^M} = b\) & part of \(Z^M = I\): \eqref{eq:W1}--\eqref{eq:W3} \\
		\((2,4)\) & \(a^{z^M} = a\) & part of \(Z^M = I\): \eqref{eq:W1}--\eqref{eq:W3} \\
		\((3,4)\) & \(a^{b^N} = a\) & holds identically \\
		\bottomrule
	\end{tabular}
\end{center}

\paragraph{(W) Identities \(\bigl(g_j^{g_i}\bigr)^{e_j} = \bigl(g_j^{e_j}\bigr)^{g_i}\)
	reducing to power consistency, for \(i<j\).}
Raising the conjugate \(g_j^{g_i}\) to the power \(e_j\) must agree with the
power relation \(g_j^{e_j} = 1\) transported by \(g_i\).   For \((i,j)=(1,2)\)
this is \((z^t)^M = 1\). By~\eqref{eq:zt-simple} (valid by~\eqref{eq:W6}),
\(z^t = z^{1-2c}a^{-r}b^{-s}\), and the expansion in \(B\rtimes\langle
z\rangle\) used in \((\mathcal I_7)\) shows that
\((z^t)^M = z^{(1-2c)M}\,(a^{-r}b^{-s})^{S_M(Z^k)}\).   Since \(z^M = 1\)
the \(z\)-factor is trivial; the base factor vanishes precisely
by~\eqref{eq:W7}.   This is where~\eqref{eq:W7} is used.   The
remaining five pairs have their conjugate \(g_j^{g_i}\) lying in the
abelian base \(B\) (for \(i\ge 2\)) or equal to a single base generator
(for \(i=1\), \(j\ge 3\)), so the power relations \(a^N = b^N = 1\) apply
directly.

\begin{center}
	\begin{tabular}{lll}
		\toprule
		Pair \((i,j)\) & Reduces to & Outcome \\
		\midrule
		\((1,2)\) & \((z^t)^M = 1\) &  \eqref{eq:W7} (via~\eqref{eq:W6}) \\
		\((1,3)\) & \((b^t)^N = a^N = 1\) & from \(a^N=1\) \\
		\((1,4)\) & \((a^t)^N = b^N = 1\) & from \(b^N=1\) \\
		\((2,3)\) & \((b^z)^N = 1\) & from \(a^N=b^N=1\) \\
		\((2,4)\) & \((a^z)^N = 1\) & from \(a^N=b^N=1\) \\
		\((3,4)\) & \((a^b)^N = a^N = 1\) & from \(a^N=1\) \\
		\bottomrule
	\end{tabular}
\end{center}

\paragraph{(C) Identities \(g_k^{\,g_jg_i} = (g_k^{\,g_j})^{g_i}\),
	for \(i<j<k\).}
The two sides are the two reductions of the overlap \(g_kg_jg_i\), and
their equality expresses associativity of conjugation.   There are four
such triples.

\emph{Triples \((1,2,3)\) and \((1,2,4)\)} involve conjugation of a base
generator by both \(t\) and \(z\).   For \((1,2,4)\), we show that \(a^{\,zt} = (a^{z})^t\) by comparing the two reduction paths. Along the first path, conjugating by \(z\) first, we have,
\((a^z)^t = (a^{1+p}b^q)^t = b^{1+p}a^q\). In additive notation, identifying \(a\) with
\(\binom10\), this corresponds to \(\binom{1}{0}\stackrel{z}{\rightarrow}Z\binom{1}{0}\stackrel{t}{\rightarrow}JZ\binom{1}{0}=ZJ\binom{1}{0}=Z\binom{0}{1}\), where we have used that \(JZ=ZJ\). Along the second path, using \(a^{zt}= a^{tz^t}\), together with
\(z^t = z^k\gamma\) and the fact that \(B\) acts trivially on itself by conjugation, we obtain, \(a^{zt} = (a^t)^{z^t} = b^{z^k}\), additively
\(Z^k\binom{0}{1}\).   The two paths agree iff \(Z^k\binom{0}{1}
= Z\binom{0}{1}\); combined with the symmetric calculation for
\((1,2,3)\) which gives \(Z^k\binom{1}{0} = Z\binom{1}{0}\), this requires
\(Z^{k-1} = I\) as a matrix in \(\mathrm{GL}_2(\Z_N)\).  This requirement is given by~\eqref{eq:W6}, so both triples hold without any further constraint.

\emph{Triples \((1,3,4)\) and \((2,3,4)\)} involve the commuting base
generators.   Indeed, \((a^b)^t = a^t = b = (a^t)^b\) and \((a^b)^z = a^z = (a^z)^b\),
both from \([a,b]=1\).

\begin{center}
	\begin{tabular}{lll}
		\toprule
		Triple \((i,j,k)\) & Reduces to & Outcome \\
		\midrule
		\((1,2,3)\) & \(b^{\,zt} = (b^z)^t\) & from \eqref{eq:W6} \\
		\((1,2,4)\) & \(a^{\,zt} = (a^z)^t\) & from \eqref{eq:W6} \\
		\((1,3,4)\) & \((a^b)^t = (a^t)^b\) & from \([a,b]=1\) \\
		\((2,3,4)\) & \((a^b)^z = (a^z)^b\) & from \([a,b]=1\) \\
		\bottomrule
	\end{tabular}
\end{center}

\subsection{Completing the proof}

By the consistency theorem for polycyclic presentations
(\cite[Lemma~2.5]{Eick2000} and~\cite[p.~424]{Sims}), the
PC-presentation~\eqref{eq:main-pres} satisfies all consistency
conditions, hence defines a group \(G\) of order exactly
\(e_1e_2e_3e_4 = 2N^2M\).

The subgroup \(H = \langle a,b,t\rangle\) consists of elements whose
normal form has exponent \(d_2 = 0\) (no \(z\)), so \(|H| = 2N^2\); combined with the
imposed relations \(a^N = b^N = t^2 = 1\), \([a,b]=1\), \(a^t = b\), \(b^t = a\),
this gives \(H\cong\Wr_{2^n}\).   The subgroup \(K = \langle z\rangle\) has
order \(M\), so \(K\cong C_{2^m}\).   Their intersection \(H\cap K\) has both
\(d_2 = 0\) (from \(H\)) and \(d_1 = d_3 = d_4 = 0\) (from \(K\)), so it is
trivial.   Therefore \(|HK| = |H|\cdot|K|/|H\cap K| = 2N^2M = |G|\), and
\(G = HK\) is an exact product.   The conjugation rules
\(a^z, b^z\in B\) show that \(B^z = B\), and \(B^t = B\), so \(B\) is normal in
\(G\).   Finally, by Proposition~\ref{prop:zt}, \(H\) is normal in \(G\) when
\(2c\equiv 0\pmod M\) (and then \(G = H\rtimes K\) is split) and not normal
otherwise (and then \(G\) is a non-split exact product, with \(G/B\)
non-abelian of order \(2M\) with a cyclic subgroup of index \(2\)). \qed

\medskip



\section{Recovering the split case when \(M\ne 4\)}\label{sec:split-recover}

In this section we show that, in the split case, the general system of congruences \eqref{eq:W1}--\eqref{eq:W7} given in Theorem~\ref{thm:main} reduces to only five conditions. Throughout this section, we assume that \(M \not= 4\), as this exceptional case will be treated separately later.

First we need the following technical result
\begin{lemma}\label{lem:SM-valuation}
	For every odd integer \(\mu\) and every \(M = 2^m\) with \(m\ge 1\), the
	geometric sum \(S_M(\mu) = 1 + \mu + \mu^2 + \cdots + \mu^{M-1}\) is
	divisible by \(\frac{M(\mu+1)}{2}\) in \(\Z\), with \(2\)-adic valuation
	\[
	v_2\bigl(S_M(\mu)\bigr) \;=\; v_2\bigl(\mu+1\bigr) + (m-1)
	\;=\; v_2\!\left(\tfrac{M(\mu+1)}{2}\right).
	\]
	Equivalently, \(S_M(\mu) / \tfrac{M(\mu+1)}{2}\) is an odd integer.
\end{lemma}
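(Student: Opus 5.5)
Induction on \(m\), using the factorisation
\[
S_{2L}(\mu) \;=\; S_L(\mu)\,\bigl(1+\mu^{L}\bigr),
\]
valid for every positive integer \(L\). It follows by splitting the geometric sum into its first \(L\) terms and its last \(L\) terms, the latter being \(\mu^{L}S_L(\mu)\).

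\emph{Base case \(m=1\).} Here \(M=2\) and \(S_2(\mu)=1+\mu\), while \(\tfrac{M(\mu+1)}{2}=\mu+1\). The quotient is therefore \(1\), which is odd. The valuation formula reads \(v_2(\mu+1)+0\), as required.

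\emph{Inductive step.} Suppose the claim holds for \(M=2^m\) and apply the factorisation with \(L=M\):
\[
S_{2M}(\mu)=S_M(\mu)\,(1+\mu^{M}).
\]
Since \(M\) is even, \(\mu^M\) is an odd square, so \(\mu^{M}\equiv 1\pmod 8\). Hence \(1+\mu^M\equiv 2\pmod 8\), so \(1+\mu^{M}=2u\) with \(u\) odd.

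Writing \(S_M(\mu)=\tfrac{M(\mu+1)}{2}\,w\) with \(w\) odd (the inductive hypothesis), we get
\[
S_{2M}(\mu)=\tfrac{2M(\mu+1)}{2}\,(wu),
\]
and \(wu\) is odd. This gives both the divisibility and the valuation \(v_2(\mu+1)+m\) for \(2M=2^{m+1}\).

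The only step needing care is the fact that \(1+\mu^{M}\) has valuation exactly \(1\) once \(M\ge 2\). This is where evenness of the exponent matters: for \(M=1\) the factor \(1+\mu\) can have arbitrary valuation, and that valuation is precisely what is recorded in the base case. Everything else is formal bookkeeping of odd cofactors, and the identity \(v_2\bigl(\tfrac{M(\mu+1)}{2}\bigr)=v_2(\mu+1)+m-1\) is immediate.
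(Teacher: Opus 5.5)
Your proof is correct. It rests on the same arithmetic fact as the paper's: \(1+\mu^{2^i}\equiv 2\pmod 4\) for \(i\ge 1\) (mod \(4\) already suffices; mod \(8\) is more than you need). Unrolling your induction gives \(S_{2^m}(\mu)=\prod_{i=0}^{m-1}\bigl(1+\mu^{2^i}\bigr)\), which is the paper's factorisation of \(\mu^M-1\) with the factor \(\mu-1\) removed.

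The difference is organisational, but it matters. The paper computes \(v_2(\mu^M-1)\) from \(\mu^M-1=(\mu-1)(\mu+1)\prod_{i=1}^{m-1}(\mu^{2^i}+1)\) and then sets \(v_2(S_M(\mu))=v_2(\mu^M-1)-v_2(\mu-1)\), reading off divisibility from the valuation count. Your induction works multiplicatively on \(S_M\) itself, which buys two things:
\begin{itemize}
\item It never divides by \(\mu-1\), so it also covers \(\mu=1\), where the paper's subtraction is \(\infty-\infty\). This case does occur in the later application, e.g.\ for \(Z=I\).
\item It produces the odd cofactor \(w\) as an explicit integer, so it genuinely proves divisibility by \(\tfrac{M(\mu+1)}{2}\) in \(\Z\). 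Equality of \(2\)-adic valuations alone says nothing about the odd part of \(\mu+1\). That is harmless for the use modulo \(N=2^n\), but integral divisibility is what the statement asserts.
\end{itemize}
In exchange, the paper's route yields the lifting-the-exponent value of \(v_2(\mu^M-1)\) as a by-product.

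One small wording point: at \(\mu=-1\) both \(S_M(\mu)\) and \(\tfrac{M(\mu+1)}{2}\) vanish. So ``the quotient is \(1\)'' in your base case should be read as \(S_2(\mu)=(\mu+1)\cdot 1\). Since your induction is phrased via \(S_M(\mu)=\tfrac{M(\mu+1)}{2}\,w\) with \(w\) odd, it then goes through unchanged.
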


\begin{proof}
	From the identity \((\mu-1)\,S_M(\mu) = \mu^M - 1\) in \(\Z\), we
	have \(v_2(S_M(\mu)) = v_2(\mu^M-1) - v_2(\mu-1)\).   It remains to
	compute \(v_2(\mu^M-1)\).
	
	Since \(M = 2^m\), we factor
	\[
	\mu^M - 1 \;=\; (\mu-1)(\mu+1)\,\prod_{i=1}^{m-1}\bigl(\mu^{2^i}+1\bigr).
	\]
	The first two factors contribute \(v_2(\mu-1) + v_2(\mu+1)\).   For
	\(i\ge 1\), since \(\mu\) is odd we have 
	\(\mu^{2^i}+1\equiv 2\pmod 4\)),   hence
	\(v_2(\mu^{2^i}+1) = 1\).   Summing over \(i = 1,\dots,m-1\) contributes
	\(m-1\) to the valuation.   Therefore
	\[
	v_2(\mu^M-1) \;=\; v_2(\mu-1) + v_2(\mu+1) + (m-1),
	\]
	and subtracting \(v_2(\mu-1)\):
	\[
	v_2(S_M(\mu)) \;=\; v_2(\mu+1) + (m-1).
	\]
	On the other hand, \(v_2\!\left(\tfrac{M(\mu+1)}{2}\right) = m +
	v_2(\mu+1) - 1 = v_2(\mu+1) + (m-1)\).   Hence
	\(\tfrac{M(\mu+1)}{2}\) divides \(S_M(\mu)\) in \(\Z\) and the quotient is
	odd.
\end{proof}

Now, we use this lemma for showing that the general system
\eqref{eq:W1}--\eqref{eq:W7} reduces, in this split sub-case, to the six
congruences classifying the split products, namely
\begin{align}
	\sigma^M &\equiv 1\pmod N, &
	\mu^M &\equiv 1\pmod N, &
	\sigma^M &\equiv\mu^M\pmod{2N}, \tag{W1--W3}\\
	r+s &\equiv 0\pmod N, &
   (r-s)M(\mu+1) &\equiv 0\pmod{4N}. \tag{W4\('\) , W5\('\)}
\end{align}

When \(2c\equiv 0\pmod M\) (equivalently \(k = 1-2c\equiv 1\pmod M\)),
Proposition~\ref{prop:zt} gives \(z^t\equiv z\pmod B\),
 \(G/B\) is abelian and \(H\) is normal, hence the product is split,
\(G = H\rtimes K\).   For \(M\le 4\) this coincides with ``\(c\) even''; for
\(M\ge 8\) it is strictly stronger, since \(c\) even also admits
\(k = M/2+1\ne 1\), for which \(G/B\) is non-abelian.

Conditions (W1)--(W3) are common to both formulations.   Since \(k\equiv 1\pmod M\), also \(k^2\equiv 1\pmod M\), and so \eqref{eq:W4} imposes no constraint; also \eqref{eq:W6} (\(Z^{2c} = Z^0 = I\)) is trivial.   The non-trivial reduction is thus from~\eqref{eq:W5}
and~\eqref{eq:W7} to (W4\('\)) and (W5\('\)).

Choosing the canonical residue \(k = 1\) in \(\{0,1,\dots,M-1\}\), the
sum \(S_k(Z^k) = S_1(Z) = I\).
By~\eqref{eq:zt-simple}, for
\(2c \equiv 0 \pmod M\), \(z^t = z\,a^{-r}b^{-s}\).   Hence, since \(a^t = b\)
and \(b^t = a\), we obtain
\[
(z^t)^t \;=\; (z\,a^{-r}b^{-s})^t
\;=\; z^t\,(a^{-r}b^{-s})^t
\;=\; z\,a^{-r}b^{-s}\cdot a^{-s}b^{-r}
\;=\; z\,a^{-(r+s)}\,b^{-(r+s)},
\]
and imposing \((z^t)^t = z\) forces \(a^{-(r+s)}b^{-(r+s)} = 1\) in \(B\),
i.e.,
\begin{equation}\label{eq:W4prime-derived}
	r + s \equiv 0 \pmod N,
\end{equation}
which is (W4\('\)).   The same conclusion is recovered by
applying~\eqref{eq:W5} with \(k \equiv 1 \pmod M\), since the operator
\(S_k(Z^k) + J\) becomes \(I + J\), and
\((I+J)\tbinom rs = \tbinom{r+s}{r+s} \equiv 0\pmod N\) gives back
\eqref{eq:W4prime-derived}.
In particular we highlight that \(r+s\equiv 0\pmod N\) with \(N\) even forces \(r\) and \(s\) to have the same parity, so that \(r-s\equiv 0\pmod 2\).
The remaining condition (W5\('\)) emerges
from~\eqref{eq:W7}.   In the split
sub-case, \(z^t = z\,a^{-r}b^{-s}\) and~\eqref{eq:W7} reads
\begin{equation}\label{eq:W7-split}
	S_M(Z)\binom{r}{s}\equiv\binom00\pmod N.
\end{equation}
Since \(Z\) is a symmetric circulant, its eigenvectors over \(\Z\) are
\(u = \binom{1}{1}\) and \(v = \binom{1}{-1}\), with \(Zu = \sigma u\) and
\(Zv = \mu v\); consequently \(S_M(Z)\,v = S_M(\mu)\,v\).

By~\eqref{eq:W4prime-derived}, we have
\(s\equiv -r\pmod N\), which gives
\[
\binom rs \;\equiv\; r\binom{1}{-1}\pmod N,
\]
and therefore, applying the integer matrix \(S_M(Z)\) and reducing
modulo \(N\),
\[
S_M(Z)\binom rs \;\equiv\; S_M(Z)\,r\binom{1}{-1}
\;=\; S_M(\mu)\,r\binom{1}{-1}\pmod N.
\]
Consequently, by~\ref{eq:W7-split} we obtain
\begin{equation}\label{eq:W7-antisym}
	S_M(\mu)\,r \equiv 0 \pmod N.
\end{equation}


By Lemma~\ref{lem:SM-valuation}, \(S_M(\mu) = \tfrac{M(\mu+1)}{2}\,w\) with
\(w\in\Z\) odd; since \(N\) is a power of \(2\), \(w\) is a unit in \(\Z_N\), and
\eqref{eq:W7-antisym} is equivalent to
\(\tfrac{M(\mu+1)}{2}\,r\equiv 0\pmod N\), that is,
\begin{equation}\label{eq:W6-in-r}
	M(\mu+1)\,r \equiv 0 \pmod{2N}.
\end{equation}

It remains to rewrite~\eqref{eq:W6-in-r} in the symmetric form
\textnormal{(W5\('\))}.   Take the canonical representatives
\(r,s\in\{0,\dots,N-1\}\).   Then \(r+s\equiv 0\pmod N\) leaves only the two
possibilities \(r = s = 0\) and \(r+s = N\). In the first case, \textnormal{(W6\('\))} is obvious, in the second case, we can write \(r-s = 2r-N\), and so
\[
M(\mu+1)(r-s) \;=\; 2\,M(\mu+1)\,r \;-\; N\,M(\mu+1).
\]
Since \(M\) is even and \(\mu\) is odd, \(M(\mu+1)\equiv 0\pmod 4\), whence
\(N\,M(\mu+1)\equiv 0\pmod{4N}\) and
\[
M(\mu+1)(r-s) \;\equiv\; 2\,M(\mu+1)\,r \pmod{4N}.
\]
Comparing
with~\eqref{eq:W6-in-r}, we obtain the equivalence
\[
M(\mu+1)\,r \equiv 0 \pmod{2N}
\quad\Longleftrightarrow\quad
M(\mu+1)(r-s) \equiv 0 \pmod{4N},
\]
whose right-hand side is \textnormal{(W5\('\))}.   

 Thus
the system~\eqref{eq:W5},~\eqref{eq:W7} for \(2c\equiv 0\pmod M\) is
exactly the conjunction \textnormal{(W4\('\)), (W5\('\))}, and
Theorem~\ref{thm:main} contains the split classification as its split
specialisation.

\section{The non-symmetric case \(JZ\ne ZJ\)}\label{sec:nonsym}

We now 
deal with the exact products \(G = HK\) with \(B\)
normal in which the matrix \(Z\) of the \(z\)-action on \(B\)
satisfies \(JZJ = Z^{k}\) (Equation~\eqref{eq:JZJ-epsilon}) but does not commute with the swap
\(J\).   These products are necessarily non-split, indeed by
Lemma~\ref{lem:circulant} \(JZJ = Z^{k}\), so the split case \(k = 1\)
(equivalently \(2c\equiv 0\pmod M\)) forces \(JZJ = Z\), i.e.\ \(JZ = ZJ\);
contrapositively, \(JZ\ne ZJ\) implies \(k\ne 1\) and the product is
non-split.   
We give a uniform
treatment of the entire non-symmetric branch in entry form, free of
any a priori symmetry assumption on \(Z\).

\subsection*{Notation and Cayley--Hamilton sequences for a general \(Z\)}

Throughout this section we drop the parametrisation
\(Z = \begin{psmallmatrix} 1+p & q \\ q & 1+p \end{psmallmatrix}\)
of~\eqref{eq:Zmatrix} and write \(Z\) directly in entry form
\begin{equation}\label{eq:Z-entries}
	Z = \begin{pmatrix} \alpha & \beta \\ \gamma & \delta \end{pmatrix}
	\in \mathrm{GL}_2(\Z_N).
\end{equation}
The PC-presentation of \(G\) becomes
\begin{equation}\label{eq:pres-entries}
	G = \left\langle a, b, t, z \,\middle|\,
	\begin{array}{l}
		a^N = b^N = t^2 = z^M = 1,\ [a,b] = 1,\\
		a^t = b,\quad b^t = a,\\
		a^z = a^{\alpha} b^{\beta},\quad b^z = a^{\gamma} b^{\delta},\\
		t^z = t\,z^{2c}\,a^r\,b^s
	\end{array}
	\right\rangle
\end{equation}
with \(\alpha,\beta,\gamma,\delta,r,s\in\Z_N\) and \(c\in\Z_M\).   

Our convention throughout this section is that \(Z\) acts on row exponent
vectors by right multiplication, that is,
\((a^ib^j)^z = (i,j)Z = (i\alpha+j\gamma,\; i\beta+j\delta)\).

\medskip

The trace and determinant of \(Z\) are
\[
\tau := \alpha + \delta= \operatorname{tr}Z,
\qquad
\nu := \alpha\delta - \beta\gamma \;=\; \det Z \in (\Z_N)^*.
\]
By the Cayley--Hamilton theorem \(Z^2 = \tau Z - \nu I\), so \(Z^j\) lies
in the \(\Z_N\)-span of \(Z\) and \(I\) for every \(j\ge 0\).   Define
integer sequences \((A_j)_{j\ge 0}, (B_j)_{j\ge 0}\) by
\begin{equation}\label{eq:AB-rec}
	A_0 = 0,\quad A_1 = 1,\quad A_{j+1} = \tau\,A_j - \nu\,A_{j-1},
	\qquad
	B_0 = 1,\quad B_j = -\nu\,A_{j-1}\ (j\ge 1).
\end{equation}
Then \(Z^j = A_j\,Z + B_j\,I\) for all \(j\ge 0\), and the four entries of
\(Z^j\) are
\begin{equation}\label{eq:Zj-entries}
	(Z^j)_{11} = A_j\alpha + B_j,\quad
	(Z^j)_{12} = A_j\beta,\quad
	(Z^j)_{21} = A_j\gamma,\quad
	(Z^j)_{22} = A_j\delta + B_j.
\end{equation}


\subsection*{The main theorem}

The following theorem is stated in entry form and covers both
the symmetric and non-symmetric cases.   It generalises
Theorem~\ref{thm:main} and is the main result of this section.

\begin{theorem}[Unified classification, \(M\ne 4\)]\label{thm:nonsym}
	Let \(n\ge 1\), \(m\ge 1\), \(N = 2^n\), \(M = 2^m\ne 4\).   A tuple
	\((\alpha,\beta,\gamma,\delta,r,s,c)\in\Z_N^4\times\Z_N^2\times\Z_M\)
	defines via the PC-presentation~\eqref{eq:pres-entries} an exact
	product \(G = HK\) with \(H\cong\Wr_{2^n}\), \(K\cong C_{2^m}\) and \(B\)
	normal in \(G\) if and only if, setting \(k := 1-2c\), the following conditions
 	are satisfied
	\begin{align}
		& A_M\,\alpha + B_M \equiv 1,\quad
		A_M\,\beta \equiv 0,\quad
		A_M\,\gamma \equiv 0,\quad
		A_M\,\delta + B_M \equiv 1 \pmod N,
		\tag{N1}\label{eq:N1}\\
		& A_k\,\alpha + B_k \equiv \delta,\quad
		A_k\,\beta \equiv \gamma,\quad
		A_k\,\gamma \equiv \beta,\quad
		A_k\,\delta + B_k \equiv \alpha \pmod N,
		\tag{N2}\label{eq:N2}\\
		& (1-2c)^2 \equiv 1 \pmod M,
		\tag{N3}\label{eq:N3}\\
		& \bigl(r,\,s\bigr) Z^{-2c}\bigl(S_k(Z^k) + J\bigr)
		\equiv (0,0) \pmod N,
		\tag{N4}\label{eq:N4}\\
		& \bigl(r,\,s\bigr) Z^{-2c} S_M(Z^k)
		\equiv (0,0) \pmod N,
		\tag{N5}\label{eq:N5}
	\end{align}
	where \(S_l(X) := I + X + \cdots + X^{l-1}\), for any positive integer \(l\).
	Equivalently, \eqref{eq:N4} reads
	\(\bigl(S_k(Z^k)^{\mathsf T} + J\bigr)(Z^{-2c})^{\mathsf T}\binom{r}{s}
	\equiv \binom00\pmod N\).
\end{theorem}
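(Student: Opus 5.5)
The plan is to mirror the proof of Theorem~\ref{thm:main} in two directions, replacing every use of the circulant eigenbasis by the Cayley--Hamilton description $Z^j = A_jZ + B_jI$ of \eqref{eq:AB-rec}--\eqref{eq:Zj-entries}. We also keep the row-vector convention, so that $(a^ib^j)^{z} = (i,j)Z$.

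\emph{Necessity.} Assume $G$ is an exact product with $B$ normal. Then $z^M=1$ forces $Z^M=I$, and reading off the four entries via \eqref{eq:Zj-entries} gives \eqref{eq:N1}. Lemma~\ref{lem:circulant} gives $JZJ = Z^k$ with $k=1-2c$. Conjugating by $J$ swaps the rows and columns of $Z$, so $JZJ = \begin{psmallmatrix}\delta&\gamma\\ \beta&\alpha\end{psmallmatrix}$; comparing entries with $A_kZ+B_kI$ gives \eqref{eq:N2}. Here $k$ should be taken as a positive representative modulo $M$, and we note that $Z^M=I$ makes $Z^k$ independent of that choice. Next, the argument of $(\mathcal I_2)$ is unchanged, since it never used symmetry of $Z$, and gives
\[
z^t = z^{k}\beta_0, \qquad \beta_0 = (a^{-r}b^{-s})^{Z^{-2c}},
\]
that is, $\beta_0 = -(r,s)Z^{-2c}$ in row notation. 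We no longer have $Z^{2c}=I$, so the factor $Z^{-2c}$ stays. Identity $(\mathcal I_3)$ then gives \eqref{eq:N3}. For $(\mathcal I_4)$, we expand $(z^k\beta_0)^k = z^{k^2}\beta_0^{S_k(Z^k)}$ with the collection rule, and $(\beta_0)^t$ corresponds to $\beta_0 J$. This yields $\beta_0\bigl(S_k(Z^k)+J\bigr)=0$, which is \eqref{eq:N4}. Similarly, $(z^t)^M=1$ expands to $\beta_0 S_M(Z^k)=0$, which is \eqref{eq:N5}. Finally, transposing gives the column form of \eqref{eq:N4} stated in the theorem.

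\emph{Sufficiency.} Use the PC-presentation with $g_1=t$, $g_2=z$, $g_3=b$, $g_4=a$ and the rule $z^t = z^{k}\beta_0$. The checks go as follows.
\begin{itemize}
\item (P) for the pairs $(1,2)$ and $(2,j)$ uses \eqref{eq:N3}, \eqref{eq:N4} and \eqref{eq:N1}.
\item (W) for $(1,2)$ uses \eqref{eq:N5}.
\item The remaining (P) and (W) checks hold trivially, exactly as in the tables of Section~\ref{sec:suff}.
\item The overlaps $(1,2,3)$ and $(1,2,4)$ compare $x^{zt}$ with $(x^z)^t$. Since $B$ acts trivially on $B$, one reduction path gives the matrix $JZ^k$ and the other gives $ZJ$. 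These agree precisely when $JZJ=Z^k$, which is \eqref{eq:N2} (equivalently $Z^k = JZJ$, using $J^2=I$).
\item The overlaps involving $[a,b]=1$ are trivial.
\end{itemize}
The consistency theorem then gives $|G| = 2N^2M$. The argument of the final subsection of Section~\ref{sec:main-thm} then shows that $H\cong\Wr_{2^n}$, that $K\cong C_{2^m}$, that $H\cap K=1$, and that $B$ is normal in $G$.

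\emph{Main obstacle.} The delicate step is the $(1,2)$ (P)-check $(z^t)^t=z$ once $Z^{2c}\ne I$. Written out,
\[
(z^k\beta_0)^t = (z^t)^k\,\beta_0^{\,t} = (z^k\beta_0)^k\,\beta_0 J .
\]
One must verify that the collected base part is exactly $\beta_0\bigl(S_k(Z^k)+J\bigr)$, with the correct side on which $Z^{-2c}$ acts. This requires a careful order of multiplication in the row convention, and checking that no stray power of $Z$ appears when $z^{k^2}$ is reduced to $z$ using $k^2\equiv1 \pmod M$ together with $Z^M=I$. A related point is that $M\ne4$ is not actually used in this verification. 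I expect the hypothesis enters only through the normalisation of $k$ and $c$, as in Remark~\ref{rem:k-vs-c}, and I would flag this rather than rely on it.
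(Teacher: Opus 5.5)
Your proposal is correct and follows the paper's proof essentially step for step. Necessity reads off (N1) from $Z^M=I$, (N2) from $JZJ=Z^k$, and (N3)--(N5) from the closed form $z^t=z^{k}\beta_0$ via the collected expansions of $(z^t)^t$ and $(z^t)^M$; sufficiency checks the (P), (W), (C) overlaps of the same PC-presentation, with (N2) arising exactly from the triples $(1,2,3)$ and $(1,2,4)$. The ``obstacle'' you flag is already settled by the collection identity $(z^k\beta_0)^k=z^{k^2}\beta_0^{S_k(Z^k)}$ you use in the necessity part, and your observation that $M\ne 4$ is never used matches the paper, which handles the case $M=4$ by exactly that remark.
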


Notice that, when \(Z\) is symmetric circulant, \(S_k(Z^k)^{\mathsf T} =
S_k(Z^k)\) and Theorem~\ref{thm:nonsym} specialises to
Theorem~\ref{thm:main}. Suppose \(\alpha = \delta\) and \(\beta = \gamma\), so
\(Z = \begin{psmallmatrix} 1+p & q \\ q & 1+p \end{psmallmatrix}\) with
\(1+p = \alpha\), \(q = \beta\); then \(\tau = 2(1+p)\) and
\(\nu = (1+p)^2 - q^2 = \sigma\mu\), where \(\sigma = 1+p+q\) and
\(\mu = 1+p-q\) are the eigenvalues of \(Z\).   Conditions
\eqref{eq:N1}--\eqref{eq:N5} of Theorem~\ref{thm:nonsym} specialise to
the full system \eqref{eq:W1}--\eqref{eq:W7} of
Theorem~\ref{thm:main} through the following dictionary.
\begin{itemize}
	\item \eqref{eq:N1}, encoding \(Z^M = I\) entrywise, evaluated on the
	eigenvectors \(\binom11\) and \(\binom1{-1}\), gives the diagonal congruences
	\(\sigma^M\equiv 1\) and \(\mu^M\equiv 1\pmod N\) (these are
	\eqref{eq:W1}--\eqref{eq:W2}) together with the off-diagonal
	congruence \(A_M\,q\equiv 0\pmod N\); since
	\(\sigma^M-\mu^M = (\sigma-\mu)A_M = 2q\,A_M\), the latter is
	\(\sigma^M\equiv\mu^M\pmod{2N}\), i.e.\ \eqref{eq:W3}.
	\item \eqref{eq:N2}, since
	\(JZJ = Z\) for symmetric circulant \(Z\), encoding \(JZJ = Z^k\) entrywise becomes the relation \(Z = Z^k\), i.e.\ \(Z^{k-1} = Z^{-2c} = I\); this is \eqref{eq:W6}, \(Z^{2c}\equiv I\pmod
	N\).   
	\item \eqref{eq:N3} is just \eqref{eq:W4},
	\((1-2c)^2\equiv 1\pmod M\).
	\item \eqref{eq:N4} reduces to \eqref{eq:W5}. In the symmetric case
	\(S_k(Z^k)\) is symmetric (as \(Z = Z^{\mathsf T}\)) and \(Z^{-2c}\)
	commutes with \(S_k(Z^k)+J\), so multiplying by
	\(Z^{2c}\) turns the row form
	\((r,s) Z^{-2c}(S_k(Z^k)+J)\equiv 0 \pmod N\) into the column form
	\((S_k(Z^k)+J)\binom rs\equiv 0 \pmod N\).
	\item \eqref{eq:N5} reduces to \eqref{eq:W7} exactly as \eqref{eq:N4}
	reduced to \eqref{eq:W5}. Multiplying the row form
	\((r,s)\,Z^{-2c}S_M(Z^k)\equiv 0\) by  \(Z^{2c}\) and using
	that \(S_M(Z^k)\) is symmetric gives \(S_M(Z^k)\binom rs\equiv 0 \pmod N\), i.e.\ \eqref{eq:W7}.
\end{itemize}


\subsection*{Proof of necessary condition}

We use the same identities \((\mathcal{I}_1)\)--\((\mathcal{I}_5)\) that
drove the proof of Theorem~\ref{thm:main} in Section~\ref{sec:main-thm},
with \(Z\) no longer assumed symmetric circulant.

\paragraph{Identity \((\mathcal{I}_1')\): \(Z^M = I\) gives \eqref{eq:N1}.}
The relation \(z^M = 1\) forces \(Z^M = I\) in \(\mathrm{GL}_2(\Z_N)\).
Equating entries via~\eqref{eq:Zj-entries} at \(j = M\), we obtain
\(A_M\alpha + B_M = 1\), \(A_M\beta = 0\), \(A_M\gamma = 0\),
\(A_M\delta + B_M = 1\) in \(\Z_N\), which is~\eqref{eq:N1}.

\paragraph{Identity \((\mathcal{I}_2')\): the closed form for \(z^t\).}
The derivation of \((\mathcal I_2)\) in Section~\ref{sec:nec} never used
\(JZ = ZJ\); thus we obtain the same closed
form~\eqref{eq:zt-general}, namely
\(z^t = z^{1-2c}(a^{-r}b^{-s})^{Z^{-2c}}\).

\paragraph{Identity \((\mathcal{I}_3')\): \eqref{eq:N3} from \((z^t)^t = z\) mod \(B\).}
As done in \((\mathcal{I}_3)\), reducing \((z^t)^t = z\) modulo \(B\) gives
\((1-2c)^2 \equiv 1\pmod M\), i.e., \eqref{eq:N3}.

\paragraph{Identity \((\mathcal{I}_4')\): \eqref{eq:N4} from the base part of \((z^t)^t = z\).}
Write \(\beta_0 = (a^{-r}b^{-s})^{Z^{-2c}}\), so \(z^t = z^k\beta_0\) with
\(k=1-2c\) by~\eqref{eq:zt-general}.   In row-vector notation
\(\beta_0\) corresponds to  \((-r,-s) Z^{-2c}\).
We compute \((z^t)^t = (z^k\beta_0)^t = (z^t)^k\,\beta_0^t\)
and then expand the power on the right.   Within the semidirect
product structure of \(B\rtimes\langle z\rangle\) the rule is
\(\beta z = z\beta^Z\) for \(\beta\in B\), equivalently
\(z^j\beta = \beta^{Z^{-j}} z^j\).   Collecting the \(k\)
base factors past the \(z\)-powers in \((z^k\beta_0)^k\), we have
\begin{equation}\label{eq:collect}
	(z^k\beta_0)^k
	\;=\; z^{k^2}\,\beta_0^{I + Z^k + Z^{2k} + \cdots
	+ Z^{(k-1)k}}
	\;=\; z^{k^2}\,\beta_0^{S_k(Z^k)}.
\end{equation}
Using \(k^2\equiv 1\pmod M\) and \(z^M = 1\) we have \(z^{k^2} = z\).
Hence
\[
(z^t)^t \;=\; z\beta_0^{S_k(Z^k)}\beta_0^t
\;=\; z\beta_0^{S_k(Z^k) + J}.
\]
Setting \((z^t)^t = z\) forces the base correction to vanish, that is,
\(\beta_0^{S_k(Z^k) + J} = 1\) in \(B\).   In other words,
\[
(-r,-s) Z^{-2c}\bigl(S_k(Z^k) + J\bigr) \equiv (0,0) \pmod N,
\]
which is~\eqref{eq:N4}.

\paragraph{Identity \((\mathcal{I}_5')\): \eqref{eq:N2} from \(JZJ = Z^k\).}
Lemma~\ref{lem:circulant} gives \(JZJ = Z^k\) for \(k=1-2c\).   Writing
\(JZJ = \begin{psmallmatrix} \delta & \gamma \\ \beta & \alpha \end{psmallmatrix}\)
and \(Z^k = A_kZ + B_kI = \begin{psmallmatrix} A_k\alpha+B_k & A_k\beta \\
	A_k\gamma & A_k\delta+B_k \end{psmallmatrix}\), the four entrywise
equalities are precisely~\eqref{eq:N2}.

\paragraph{Identity \((\mathcal{I}_6')\): \eqref{eq:N5} from \((z^t)^M = 1\).}
Since \(z^t\) is conjugate to \(z\), which has order \(M\), the power
relation \((z^t)^M = 1\) holds.   With \(z^t = z^k\beta_0\) and
\(\beta_0 = (a^{-r}b^{-s})^{Z^{-2c}}\) as in \((\mathcal{I}_4')\), with the same reasoning in~\eqref{eq:collect} we obtain
\[
(z^k\beta_0)^M = z^{kM}\,\beta_0^ {S_M(Z^k)}.
\]
Since \(z^M = 1\) we have \(z^{kM} = 1\), so \((z^t)^M = 1\) forces
\(\beta_0^{S_M(Z^k)} = 1\) in \(B\), that is,

\[
(r,s) Z^{-2c} S_M(Z^k) \equiv (0,0)\pmod N,
\]
which is~\eqref{eq:N5}.

\subsection*{Proof of sufficient condition}

We verify the consistency
conditions~(P),~(W),~(C) of~\cite[Lemma~2.5(c)]{Eick2000} for the
PC-presentation~\eqref{eq:pres-entries} relative to the polycyclic
series~\eqref{eq:G-series} (for the corresponce between conditions~(P),~(W),~(C) and identities in~\cite[Lemma~2.5(c)]{Eick2000}, see Remark~\ref{rem:corcon}).   The generators in increasing index are
\(g_1 = t,\ g_2 = z,\ g_3 = b,\ g_4 = a\), with relative orders
\((2, M, N, N)\).   Define \((r'',s'')\in\Z_N^2\) by \((r'',s'') := (r,s)
Z^{-2c}\) (row-vector convention), so that the base part of
\(z^t = z^{1-2c}\beta_0\) from \((\mathcal{I}_2')\) is
\(\beta_0 = a^{-r''}b^{-s''}\).   The conjugation rewriting rules are
\begin{equation}\label{eq:rules-entries}
	\begin{aligned}
		g_2^{g_1} &= z^t = g_1^0\,g_2^{1-2c}\,g_3^{-s''}\,g_4^{-r''},\\
		g_3^{g_1} &= b^t = g_4,\quad g_4^{g_1} = a^t = g_3,\\
		g_3^{g_2} &= b^z = g_3^{\delta}\,g_4^{\gamma},\quad
		g_4^{g_2} = a^z = g_3^{\beta}\,g_4^{\alpha},\\
		g_4^{g_3} &= a^b = g_4.
	\end{aligned}
\end{equation}

\paragraph{(P) Identities \(g_j^{g_i^{e_i}} = g_j\).}
The pairs \((1,3),(1,4),(3,4)\) are immediate from \(a^{t^2}=a\), \(b^{t^2}=b\), and
\(a^{b^N}=a\), since \(t\) swaps \(a\) and \(b\), and \([a,b]=1\) .   For \((2,3)\) and \((2,4)\), we have that the powers \(b^{z^M}\) and \(a^{z^M}\) in
normal form are read from the second and first rows of \(Z^M\)
respectively; by~\eqref{eq:Zj-entries} at \(j=M\),
\[
b^{z^M} = a^{A_M\gamma}\,b^{A_M\delta + B_M},
\qquad
a^{z^M} = a^{A_M\alpha + B_M}\,b^{A_M\beta},
\]
and~\eqref{eq:N1} forces both to equal \(b\) and \(a\).
For \((1,2)\), \(z^{t^2} = z\) is given by Proposition~\ref{prop:zt}, by~\eqref{eq:N3} and by~\eqref{eq:N4}.

\paragraph{(W) Identities \(\bigl(g_j^{g_i}\bigr)^{e_j} = \bigl(g_j^{e_j}\bigr)^{g_i}\), for \(i<j\).}
As in the proof of Theorem~\ref{thm:main}, the non-trivial pair is \((1,2)\). We must check \((z^t)^M = 1\).   From
\(z^t = z^k\beta_0\) with \(k=1-2c\), expanding inside the semidirect
product as in~\eqref{eq:collect}, we have
\[
(z^k\beta_0)^M
\;=\; z^{kM}\,\beta_0^{ S_M(Z^k)}.
\]
Since \(z^M = 1\) we have \(z^{kM} = 1\), so the relation
\((z^t)^M = 1\) holds if and only if \(\beta_0^{S_M(Z^k)} = 1\) in \(B\),
i.e.\ \((r,s) Z^{-2c} S_M(Z^k)\equiv(0,0)\pmod N\);   this is
precisely condition~\eqref{eq:N5}. 
Pairs
\((1,3),(1,4),(2,3),(2,4)\), \((3,4)\) are handled identically to the
symmetric case.

\paragraph{(C) Identities \((g_k^{g_j})^{g_i} = g_k^{g_j g_i}\), \(k>j>i\).}
The triples \((1,3,4)\) and \((2,3,4)\) are unaffected by the form of
\(Z\), since \([a,b]=1\).   The critical triples are \((1,2,3)\) and
\((1,2,4)\). We must check that the two reductions of \(g_3 g_2 g_1\)
and \(g_4 g_2 g_1\) to normal form agree.

We carry out the computation for \((1,2,4)\); the case \((1,2,3)\) is
analogous.   The two reduction paths express \(a^{zt}\) in normal form;
since \(a^{zt}\) is a single group element, the two paths must
a fortiori produce the same word.

\emph{Path I (use \(g_2 g_1 = g_1 g_2^{1-2c}g_3^{-s''}g_4^{-r''}\) first).}
We have
\(a^{zt} = a^{t\,z^{1-2c}a^{-r''}b^{-s''}}\). 
Since \(a^t=b\) and conjugation by an element of \(B\) is trivial on \(B\),
this gives \(a^{zt} = b^{z^{1-2c}} = b^{z^k}\).   By~\eqref{eq:Zj-entries},
\[
b^{z^k} = a^{A_k\gamma}\,b^{A_k\delta+B_k}.
\]

\emph{Path II (use \(g_4^{g_2} = g_3^\beta g_4^\alpha\) first).}
We have
\( (a^z)^t = (a^\alpha b^\beta)^t = b^\alpha a^\beta = a^\beta b^\alpha\).

The two paths produce the same group element by associativity, so the
two normal forms must agree
\begin{equation}\label{eq:C124}
	A_k\gamma \equiv \beta\pmod N,
	\qquad
	A_k\delta + B_k \equiv \alpha \pmod N.
\end{equation}
These are two of the four congruences in~\eqref{eq:N2}.   The analogous
computation for the triple \((1,2,3)\), starting from
\(b^{zt}\), yields the other two congruences in~\eqref{eq:N2}:
\begin{equation}\label{eq:C123}
	A_k\beta \equiv \gamma,
	\qquad
	A_k\alpha + B_k \equiv \delta \pmod N.
\end{equation}

Hence~\eqref{eq:N2} is precisely the (C) consistency requirement on
the matrix~\(Z\).   Together with~\eqref{eq:N1} (giving (P) and (W)),
\eqref{eq:N3} and~\eqref{eq:N4} (giving the \((1,2)\) entries of (P)), and ~\eqref{eq:N5}
(giving the \((1,2)\) entries of  (W)), all consistency identities are satisfied, and the
PC-presentation defines a polycyclic group of order
\(\prod_i\!e_i = 2\,M\,N\,N = 2N^2M\).

It remains to verify the structural conditions. In particular, we have to show  that the
subgroups \(H = \langle a,b,t\rangle\) and \(K = \langle z\rangle\) are
of orders \(2N^2\) and \(M\) respectively, that \(H\cap K = 1\), and that
\(B = \langle a,b\rangle\trianglelefteq G\).   These follow
exactly as in the proof of Theorem~\ref{thm:main}, from the uniqueness
of the normal form \(t^{d_1}z^{d_2}b^{d_3}a^{d_4}\).   The subgroup
\(H = \langle a,b,t\rangle\) consists of the elements whose normal form
has \(d_2 = 0\) (no \(z\)), so \(|H| = 2N^2\); with the relations
\(a^N = b^N = t^2 = 1\), \([a,b]=1\), \(a^t = b\), \(b^t = a\) this gives
\(H\cong\Wr_{2^n}\).   The subgroup \(K = \langle z\rangle\) has order
\(M\), so \(K\cong C_{2^m}\).   Their intersection \(H\cap K\) has both
\(d_2 = 0\) (from \(H\)) and \(d_1 = d_3 = d_4 = 0\) (from \(K\)), hence is
trivial, and \(|HK| = |H|\,|K| = 2N^2M = |G|\).   Finally the conjugation
rules \(a^z = a^{\alpha}b^{\beta}\) and \(b^z = a^{\gamma}b^{\delta}\) lie
in \(B\), so \(B^z = B\); together with \(B^t = B\) (the swap
\(a^t = b\), \(b^t = a\)) this shows that \(B = \langle a,b\rangle\) is
normalised by every generator, hence \(B\trianglelefteq G\).
\hfill\(\square\)

\section{The exceptional modulus \(M = 4\)}\label{sec:M4}

At \(M = 4\) Theorems~\ref{thm:main} and~\ref{thm:nonsym} still hold in
entry form, but their conditions degenerate and it is necessary to reformulate the eigenvalue congruences.   The integer \(k = 1-2c\) satisfies
\(k\equiv -1\pmod 4\) for both odd residues \(c = 1, 3\), and \(k\equiv 1\pmod 4\) for both even residues \(c = 0, 2\), so the
involution condition~\eqref{eq:N3} (equivalently~\eqref{eq:W4}) holds
vacuously.   Moreover only \(2c\bmod 4\) -- equivalently the parity of
\(c\) -- enters the relations, so the two odd values \(c = 1\) and
\(c = 3\) yield the same presentation and there is a single odd-\(c\)
case (we take \(c = 1\)).   The remaining conditions do not vanish but
degenerate. With \(k\equiv -1 \pmod 4\) the kernel condition~\eqref{eq:W5}
collapses to the linear \((Z-J)\binom rs\equiv 0\pmod N\),
condition~\eqref{eq:W6} becomes the matrix identity \(Z^2\equiv I\pmod N\),
and~\eqref{eq:W7} becomes \((I+Z)\binom rs\equiv 0\pmod{N/2}\) -- all
still necessary, and 
sufficient (Theorems~\ref{thm:nonsym-M4} and~\ref{thm:M4}).   The exceptionality occurs precisely in the case  \(M=4\), indeed
this case is the unique modulus at which the non-abelian quotient
\(G/B\) of order \(2M = 8\) coincides with \(D_8\), the only non-abelian
\(2\)-group of that order with a cyclic subgroup of index \(2\); for \(M\ge 8\) the three involutions of \(\Z_M\)
distinguish the three types and Theorems~\ref{thm:main}
and~\ref{thm:nonsym} apply.

We exhibit what survives at \(M = 4\).   In entry form the conditions
\eqref{eq:N1}--\eqref{eq:N5} are necessary and sufficient
(Theorem~\ref{thm:nonsym-M4}); in the symmetric case they reduce to the
three congruences of Theorem~\ref{thm:M4}.   The one feature special to
\(M = 4\) is that the matrix identity \(Z^2 = I \pmod N\) admits no congruence on
\(\sigma,\mu\) modulo \(N\), in particular,  in eigenvalue coordinates it requires the
modulus-\(2N\) refinement~\eqref{eq:M4eig}.

\begin{theorem}
	\label{thm:nonsym-M4}
	Let \(N = 2^n\) and \(M = 4\).   The
	presentation~\eqref{eq:pres-entries} defines an exact product
	\(G = HK\) with \(H\cong\Wr_{2^n}\), \(K\cong C_4\) and \(B\) normal in
	\(G\) \emph{if and only if} the tuple
	\((\alpha,\beta,\gamma,\delta,r,s,c)\) satisfies
	conditions~\eqref{eq:N1}, \eqref{eq:N2}, \eqref{eq:N4}
	and~\eqref{eq:N5} of Theorem~\ref{thm:nonsym}; condition~\eqref{eq:N3}
	holds vacuously.
\end{theorem}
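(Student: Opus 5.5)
The plan is to follow the proof of Theorem~\ref{thm:nonsym} step by step and check that none of its arguments uses \(M\ne 4\). For \(M=4\), every \(c\in\Z_4\) gives \(k=1-2c\in\{1,3\}\), and both satisfy \(k^2\equiv 1\pmod 4\). So \eqref{eq:N3} is an identity in \(c\) and can be dropped from the list.

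\emph{Necessity.} Identities \((\mathcal I_1')\)--\((\mathcal I_6')\) depend only on four ingredients:
\begin{itemize}
\item the relation \(z^M=1\);
\item Lemma~\ref{lem:circulant}, which gives \(JZJ=Z^k\) and the form of \(t^z\);
\item the closed form \eqref{eq:zt-general};
\item the collection formula \eqref{eq:collect}.
\end{itemize}
None of these assumes anything about \(M\). I would therefore take \eqref{eq:N1}, \eqref{eq:N2}, \eqref{eq:N4} and \eqref{eq:N5} verbatim from those identities. One point needs attention when \(c\) is odd, where we may take the representative \(k=3\). Here \(z^{k^2}=z^9=z\) holds because \(9\equiv 1\pmod 4\). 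The sums \(S_k(Z^k)\) and \(S_M(Z^k)\) are integer matrices built from positive powers of \(Z\), so they are well defined.

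\emph{Sufficiency.} I would rerun the consistency check (P), (W), (C) for the presentation \eqref{eq:pres-entries} with relative orders \((2,4,N,N)\), using the rewriting rules \eqref{eq:rules-entries}.
\begin{itemize}
\item The pairs \((2,3)\) and \((2,4)\) in (P) come from \eqref{eq:N1}.
\item The critical triples \((1,2,3)\) and \((1,2,4)\) in (C) come from \eqref{eq:N2}, by the same two-path computation that gives \eqref{eq:C124} and \eqref{eq:C123}.
\item The pair \((1,2)\) in (W), namely \((z^t)^4=1\), reduces to \eqref{eq:N5} through the expansion \((z^k\beta_0)^4=z^{4k}\beta_0^{S_4(Z^k)}\).
\item The pair \((1,2)\) in (P), namely \((z^t)^t=z\), reduces to \(z^{k^2}=z\) together with \eqref{eq:N4}, and \(z^{k^2}=z\) now holds automatically.
\end{itemize}
After that, the structural part (orders of \(H\) and \(K\), \(H\cap K=1\), \(B\trianglelefteq G\)) follows word for word from the normal form \(t^{d_1}z^{d_2}b^{d_3}a^{d_4}\).

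\emph{Main obstacle.} The delicate step is the well-definedness of the parameters. For \(c=1\) and \(c=3\), the relation \(t^z=tz^{2c}a^rb^s\) is the same, since \(z^2=z^6\). However, \eqref{eq:N4} and \eqref{eq:N5} involve \(Z^{-2c}\) and \(Z^k\) with \(k\) read modulo \(4\), and these must not depend on the chosen representatives. I would first show that \eqref{eq:N1} (\(Z^4=I\)) makes \(Z^{-2c}\) and \(Z^k\) depend only on \(c\bmod 2\) and \(k\bmod 4\). I would then show that \(S_k(Z^k)\) changes by a multiple of \(S_4(Z^k)\) when \(k\) is shifted by \(4\), so \eqref{eq:N5} makes \eqref{eq:N4} representative-independent. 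This is the same independence the argument for general \(M\) uses implicitly; I would make it explicit here. Once it is established, the remark that "the conditions degenerate" at \(M=4\) affects only the equivalent reformulations treated in Theorem~\ref{thm:M4}, not the entry-form criterion itself.
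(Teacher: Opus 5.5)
Your proposal is correct and follows the paper's proof. Both treat the statement as the case \(M=4\) of Theorem~\ref{thm:nonsym}: the necessity identities and the (P), (W), (C) consistency checks never use \(M\ne 4\), and \eqref{eq:N3} is automatic because \(k=1-2c\) is always odd. Your extra representative-independence check is also sound, since \eqref{eq:N1} together with \(S_{k+4}(Z^k)=S_k(Z^k)+Z^{k^2}S_4(Z^k)\) and \eqref{eq:N5} makes \eqref{eq:N2} and \eqref{eq:N4} independent of the chosen representatives, a point the paper leaves implicit.
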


\begin{proof}
	This is the case \(M = 4\) of Theorem~\ref{thm:nonsym}; we check that
	neither direction uses \(M\ne 4\).
	
	\emph{Necessary condition.}   The identities \((\mathcal I_1')\)--\((\mathcal I_5')\)
	of the proof of necessary condition of Theorem~\ref{thm:nonsym} (Section~\ref{sec:nonsym}) extract
	\eqref{eq:N1}, \eqref{eq:N2}, \eqref{eq:N4} and~\eqref{eq:N5} from the
	relations \(z^M = 1\), \(z^t \in z^{k}B\), \((z^t)^t = z\) and
	\((z^t)^M = 1\), which hold in every exact product regardless of \(M\).
	Condition~\eqref{eq:N3} reduces to \((1-2c)^2\equiv 1\pmod 4\), true for
	every odd \(c\), hence vacuous.
	
	\emph{Sufficient condition.}   The sufficient condition of
	Theorem~\ref{thm:nonsym} -- the verification of the consistency
	conditions \textnormal{(P)}, \textnormal{(W)}, \textnormal{(C)} for the
	presentation~\eqref{eq:pres-entries} -- is uniform in \(M\). It uses
	\eqref{eq:N1} for the power identities of the pairs \((2,3),(2,4)\),
	\eqref{eq:N2} for the conjugation overlaps \((1,2,3),(1,2,4)\), and
	\eqref{eq:N4}, \eqref{eq:N5} for the pair \((1,2)\), invoking
	\eqref{eq:N3} only through its (here vacuous) instance.   Hence the
	presentation is consistent and defines a group of order \(2N^2M\), an
	exact product with \(B\) normal.   
\end{proof}
\begin{theorem}
	\label{thm:M4}
	Let \(N = 2^n\), \(M = 4\), and take \(Z\) symmetric circulant, with eigenvalues \(\sigma = 1+p+q\) and \(\mu = 1+p-q\).
	The presentation~\eqref{eq:main-pres} defines a non-split exact product
	\(G = HK\) with \(H\cong\Wr_{2^n}\), \(K\cong C_4\) and \(B\) normal
	\emph{if and only if}
	\begin{align}
		Z^2 &\equiv I\pmod N, \tag{M1}\label{eq:M4mat}\\
		(Z-J)\tbinom rs &\equiv\tbinom00\pmod N, \tag{M2}\label{eq:M4lin}\\
		(I+Z)\tbinom rs &\equiv\tbinom00\pmod{N/2}. \tag{M3}\label{eq:M4pow}
	\end{align}
	These are the \(M = 4\) congruences corresponding to
	\eqref{eq:W6}, \eqref{eq:W5} and \eqref{eq:W7}.   In eigenvalue
	coordinates the matrix identity~\eqref{eq:M4mat} is equivalent to
	\begin{equation}
		\sigma^2+\mu^2\equiv 2\pmod{2N}\quad\text{and}\quad
		\sigma^2\equiv\mu^2\pmod{2N},\tag{M1\('\)}\label{eq:M4eig}
	\end{equation}
	which is strictly stronger than \(\sigma^4\equiv\mu^4\equiv 1\pmod N\),
	the form to which the eigenvalue conditions of Theorem~\ref{thm:main}
	specialise at \(M = 4\).
\end{theorem}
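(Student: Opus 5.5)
The plan is to deduce Theorem~\ref{thm:M4} from Theorem~\ref{thm:nonsym-M4} by specialising to symmetric circulant \(Z\), and then to simplify the resulting conditions using \(M=4\). First, \(c\) must be odd. By Proposition~\ref{prop:zt} the product is non-split exactly when \(2c\not\equiv 0\pmod 4\), i.e.\ when \(c\) is odd. Then \(k=1-2c\equiv -1\pmod 4\), and as noted at the start of Section~\ref{sec:M4} we may take \(c=1\). Condition \eqref{eq:N3} is then vacuous. Since \(Z=Z^{\mathsf T}\) commutes with \(J\), the row forms of \eqref{eq:N4}, \eqref{eq:N5} can be turned into the column forms \eqref{eq:W5}, \eqref{eq:W7}, exactly as in the dictionary following Theorem~\ref{thm:nonsym}.

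Next, reduce each condition in turn.
\begin{enumerate}[label=(\arabic*)]
\item Condition \eqref{eq:N2} reads \(Z=JZJ=Z^{k}\), i.e.\ \(Z^{-2c}=I\). With \(2c\equiv 2\pmod 4\) and \(Z^4=I\), this is \(Z^2=I\), which is \eqref{eq:M4mat}. Conversely, \eqref{eq:M4mat} implies \(Z^4=I\), so \eqref{eq:N1} is redundant.
\item Given \(Z^2=I\), we have \(S_4(Z^k)=S_4(Z)=2(I+Z)\). Hence \eqref{eq:W7} reads \(2(I+Z)\binom rs\equiv 0\pmod N\), which is equivalent to \eqref{eq:M4pow}.
\item For \eqref{eq:W5}, the subtlety is the choice of representative of \(k\), because \(S_k(Z^k)\) depends on the integer \(k\) and not only on its residue. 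With \(k=-1\) I would redo the collection in \((\mathcal I_4)\) directly: \((z^{-1}\beta)^{-1}=\beta^{-1}z=z\,\beta^{-Z}\). The base part of \((z^t)^t=z\) then becomes \((J-Z)\binom rs\equiv 0\), which is \eqref{eq:M4lin}.
\item To reconcile this with the representative \(k=3\) used in Theorem~\ref{thm:nonsym-M4}, note that there \(S_3(Z^3)=S_3(Z)=2I+Z\). Its operator \(2I+Z+J\) differs from \(J-Z\) by \(2(I+Z)\), so under \eqref{eq:M4pow} the two versions of \eqref{eq:N4} are equivalent. This shows that \eqref{eq:M4mat}--\eqref{eq:M4pow} are equivalent to the full set \eqref{eq:N1}, \eqref{eq:N2}, \eqref{eq:N4}, \eqref{eq:N5}.
\end{enumerate}
Theorem~\ref{thm:nonsym-M4} then gives both directions at once.

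For the eigenvalue form, I would compute \(Z^2\) directly. Its diagonal entry is \((1+p)^2+q^2\) and its off-diagonal entry is \(2(1+p)q\). Using \(\sigma^2+\mu^2=2\bigl((1+p)^2+q^2\bigr)\) and \(\sigma^2-\mu^2=4(1+p)q\), the congruences ``diagonal \(\equiv 1\pmod N\)'' and ``off-diagonal \(\equiv 0\pmod N\)'' become exactly the two congruences of \eqref{eq:M4eig}, each modulo \(2N\). Clearly \eqref{eq:M4eig} implies \(\sigma^4\equiv\mu^4\equiv 1\pmod N\). For strictness I would give a counterexample: \(N=16\), \(q=0\), \(p=2\), so \(\sigma=\mu=3\). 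Then \(3^4=81\equiv 1\pmod{16}\) and \(\sigma^4\equiv\mu^4\pmod{32}\) holds trivially, but \(Z^2=9I\ne I\).

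The main obstacle is step (3)–(4): the dependence of \(S_k(Z^k)\) in \eqref{eq:W5} on the chosen integer representative of \(k\). I will handle it by the explicit \(k=-1\) collection together with the comparison showing that the discrepancy is absorbed by \eqref{eq:M4pow}.
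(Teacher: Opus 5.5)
Your proposal is correct, and its computations match the paper's. Your steps (1)--(3) are the paper's necessity derivations of \eqref{eq:M4mat}, \eqref{eq:M4pow}, \eqref{eq:M4lin}: $JZJ=Z^{-1}$ set against $JZJ=Z$, then $(z^t)^4=1$, then $(z^t)^t=z$ with $z^t=z^{-1}a^{-r}b^{-s}$. Your eigenvalue computation for \eqref{eq:M4eig} is also the paper's.

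The difference is how sufficiency is organised. The paper declares \eqref{eq:M4mat}--\eqref{eq:M4pow} to be the $M=4$ instances of \eqref{eq:W6}, \eqref{eq:W5}, \eqref{eq:W7}. It then appeals to the consistency proof of Theorem~\ref{thm:main}, even though its own remark after Proposition~\ref{prop:zt} says that sufficiency needs $M\neq 4$. You instead show that, for symmetric circulant $Z$, \eqref{eq:M4mat}--\eqref{eq:M4pow} are equivalent as a set to \eqref{eq:N1}, \eqref{eq:N2}, \eqref{eq:N4}, \eqref{eq:N5}. Theorem~\ref{thm:nonsym-M4}, which is stated for $M=4$, then gives both directions at once.

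Your step (4) is the real gain. $S_k$, $A_k$ and the normal form of $g_2^{g_1}$ all require the positive representative $k=3$. With it, \eqref{eq:W5} reads $(2I+Z+J)\binom{r}{s}\equiv 0\pmod N$, which agrees with \eqref{eq:M4lin} only modulo \eqref{eq:M4pow}. The paper's identification of \eqref{eq:M4lin} with \eqref{eq:W5} passes over this without comment.

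Your explicit witness for strictness is more than the paper gives; it only says ``but not conversely''. Note, though, that $Z=3I$ works only at $N=16$. For $N\in\{2,4\}$ every invertible symmetric circulant already satisfies $Z^2\equiv I\pmod N$, so the strictness clause can hold only for $N\ge 8$. Witnesses there are $Z=\begin{psmallmatrix}1&2\\2&1\end{psmallmatrix}$ for $N=8$ and $Z=(1+N/4)I$ for $N\ge 16$.
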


\begin{proof}
\emph{Necessary condition.}   For \eqref{eq:M4mat}, the relation
\(a^{zt} = a^{t z^t}\) forces \(JZJ = Z^k = Z^{-1}\) (as
\(k = 1-2c\equiv -1 \pmod 4\)); since \(Z\) is symmetric circulant \(JZJ = Z\),
whence \(Z = Z^{-1}\), i.e.\ \(Z^2 \equiv I \pmod N\).   For \eqref{eq:M4lin}, the
closed form of Proposition~\ref{prop:zt} reads \(z^t = z^{-1}\beta\) with
\(\beta = a^{-r}b^{-s}\); applying \(t\),
\[
z = (z^t)^t = (z^{-1}\beta)^t = (z^t)^{-1}\beta^t
= (z^{-1}\beta)^{-1}\beta^t = \beta^{-1}\,z\,\beta^t ,
\]
and transporting \(\beta^{-1}\) past \(z\) by \(\beta z = z\,\beta^{Z}\)
gives  \(z=z\beta^{-Z}\beta^t\); additively this yields \((Z-J)\binom rs\equiv 0\pmod N\).
Finally, for \eqref{eq:M4pow}, the power consistency \((z^t)^4 = 1\)
gives \(S_4(Z^{-1})\binom rs\equiv 0\pmod N\); with \(Z^2 \equiv I \pmod N\) one has
\(S_4(Z^{-1}) = I+Z+I+Z = 2(I+Z)\), so \(2(I+Z)\binom rs\equiv 0\pmod N\).
	
	\emph{Eigenvalue form of~\eqref{eq:M4mat}.}   Write \(u = 1+p\), so
\(Z = \left(\begin{smallmatrix}u&q\\q&u\end{smallmatrix}\right)\) and
\(Z^2 = \left(\begin{smallmatrix}u^2+q^2 & 2uq\\ 2uq & u^2+q^2\end{smallmatrix}\right)\),
thus \(Z^2\equiv I\pmod N\) iff \(u^2+q^2\equiv 1\) and \(2uq\equiv 0\pmod N\).
The exact identities \(\sigma^2+\mu^2 = 2(u^2+q^2)\) and
\(\sigma^2-\mu^2 = 4uq\) turn these into \(\sigma^2+\mu^2\equiv 2 \pmod{2N}\) and
\(\sigma^2-\mu^2\equiv 0\pmod{2N}\), which is~\eqref{eq:M4eig}.   It forces
\(\sigma^4\equiv\mu^4\equiv 1\pmod N\) but not conversely.
	
	\emph{Sufficient condition.}   Conditions \eqref{eq:M4mat}--\eqref{eq:M4pow} are
the \(M = 4\) instances of \eqref{eq:W6}, \eqref{eq:W5} and
\eqref{eq:W7}; together with \eqref{eq:W1}--\eqref{eq:W3} -- trivial
here, since \eqref{eq:M4mat} gives \(\sigma^2\equiv\mu^2\equiv 1\pmod N\)
and hence \(\sigma^4\equiv\mu^4\equiv 1\pmod{N}\) and \(\sigma^4\equiv\mu^4\equiv 1\pmod{2N}\)
-- and \eqref{eq:W4} (vacuous), they form the
full hypothesis of Theorem~\ref{thm:main} at \(M = 4\), whose
sufficiency proof (Section~\ref{sec:suff}) is uniform in \(M\).  
\end{proof}


\section{Open problems}\label{sec:conclusion}


In this paper, we classify the
Zappa--Sz\'ep products \(G = HK\), where \(H = C_{2^n}\wr C_2\) is the wreathed
\(2\)-group and \(K\) a cyclic group of order \(2^m\), in which the base
\(B \cong C_{2^n}\times C_{2^n}\) of \(H\) is normal in \(G\).

It is natural to ask whether \(B\) is normal in every exact
product \(G = HK\) with \(H\cong\Wr_{2^n}\) and \(K\cong C_{2^m}\).   However, the following counterexample shows that this conjecture is false in general.  Among the exact products
\(\Wr_4\cdot C_4\) \textnormal{(}order \(128\)\textnormal{)} there are
exactly \(23\) in which \(B\) is not normal, the first being
\(\texttt{SmallGroup}(128,521)\), in the \textsf{GAP} small-groups
library; among the exact products \(\Wr_4\cdot C_8\)
\textnormal{(}order \(256\)\textnormal{)} there are again many, for
instance \(\texttt{SmallGroup}(256,2992)\).   The normality hypothesis   on \(B\)
 is therefore an effective  restriction, and the present
classification -- Theorems~\ref{thm:main} and~\ref{thm:nonsym} --
describes precisely the \(B\)-normal exact products.

Some directions remain open.   First, and most prominently, as already said the
exact products in which \(B\) is not normal fall outside the
present classification and call for a separate treatment; by the
experiments performed by \textsf{GAP},  they could
amount to the transitive \(2\)-groups on \(2^m\) points arising as
\(G/\operatorname{core}_G(H)\), together with the attendant extension
data.

A more ambitious direction is the classification of exact
products of two wreathed groups, where both factors have a base of rank
two and the analysis is governed by a pair of commuting swap operators.


\end{document}